\DeclareMathOperator*{\Argmax}{Arg\,max}
\def\bse{\begin{eqnarray*}}
\def\ese{\end{eqnarray*}}
\def\be{\begin{eqnarray}}
\def\ee{\end{eqnarray}}
\def\bq{\begin{equation}}
\def\eq{\end{equation}}
\def\bse{\begin{eqnarray*}}
\def\ese{\end{eqnarray*}}
\newtheorem{definition}{Definition}
\newtheorem{theorem}[definition]{Theorem}
\newtheorem{lemma}[definition]{Lemma}
\newtheorem{example}[definition]{Example}
\newtheorem{proposition}[definition]{Proposition}
\newtheorem{conjecture}[definition]{Conjecture}
\begin{document}

\title{On the conjecture by Demyanov-Ryabova in converting finite exhausters}
\date{}
\author{Tian Sang \thanks{Discipline of Mathematical Sciences, School of Science, RMIT University, Australia.\newline
Email: s3556268@student.rmit.edu.au}}

\maketitle

\begin{abstract}
In this paper, we prove the conjecture of Demyanov and Ryabova on the length of cycles in converting exhausters in an affinely independent setting and obtain a combinatorial reformulation of the conjecture.

Given a finite collection of polyhedra, we can obtain its ``dual" collection by forming another collection of polyhedra, which are obtained as the convex hull of all support faces of all polyhedra for a given direction in space. If we keep applying this process, we will eventually cycle due to the finiteness of the problem. Demyanov and Ryabova claim that this cycle will eventually reach a length of at most two.

We prove that the conjecture is true in the special case, that is, when we have affinely independent number of vertices in the given space. We also obtain an equivalent combinatorial reformulation for the problem, which should advance insight for the future work on this problem.
\end{abstract}

\section*{Introduction}

Exhausters are multiset objects that generalise the subdifferential of a convex function. Such constructions are popular in applied optimisation as they allow for exact calculus rules and easy conversion from `upper' to `lower' characterisations of the directional derivative. Exhausters were introduced by Demyanov in \cite{Demyanov-Orig} and attracted a noticeable following in the optimisation community \cite{AbbasovDemyanov,Abbasov,DemyanovRoshchina,GVTM,KALM,KMURGJKYGTDSM,Murzabekova,Uderzo}. Exhausters and other constructive generalisations of the convex subdifferential such as quasi- and codifferentials allow for straightforward generalisation of Minkowski duality that is not available for other classic constructions \cite{Ioffe,Kruger}. Neither the essentially primal graphical derivatives \cite{RockafellarWets} nor dual coderivative objects \cite{Mordukhovich} allow for well-defined dual characterisations. The exhauster approach is not without drawbacks: such constructions inherently lack uniqueness, and whilst some works are dedicated to finding minimal objects \cite{Roshchina}, it is shown that minimal exhausters do not exist in some cases \cite{Nonunique}. The conjecture that we are studying in this paper is in a similar vein: we want to establish the uniqueness of a dual characterisation of a function by establishing a steady 2-cycle in the relevant dynamical system defined by the conversion operator.

Constructive nonsmooth subdifferentials are well suited for practical applications, especially in finite dimensional continuous problems with minimax structure of the objective function, and have been utilised successfully both in applied problems such as data classification (see an overview \cite{Bagirov}) and in theoretical problems coming from other fields, such as spline approximation \cite{Splines}.

Given a positively homogeneous function $h:\mathbb{R}^n \to \mathbb{R}$, its upper exhauster $E^*$ is a family of closed convex sets such that $h$ has an exact representation
$$h(x) = \inf_{C\in E^*}\sup_{v\in C}\langle v, x\rangle,$$
so that $h$ is the infimum over a family of sublinear functions. An upper exhauster $E^*$ is the collection of subdifferentials of these functions. The lower exhauster $E_* h$ is defined symmetrically as a supremum over a family of superlinear functions. Exhausters constructed for first order homogeneous approximations of nonsmooth functions (such as Dini and Hadamard directional derivatives) provide sharp optimality conditions, moreover, exhausters enjoy exact calculus rules which makes them an attractive tool for applications.

An upper exhauster can be converted into a lower one and vice versa using a convertor operator introduced in \cite{DemyanovNewTools}. Upper exhauster is a more convenient tool for checking the conditions for a minimum (and vice versa, lower exhauster is better suited for maximum); conversion is also necessary for the application of some calculus rules.

%For the technical details on the background for this conjecture and the definition of exhausters we refer the reader to \cite{DemyanovRyabova}.

When the positively homogeneous function $h$ is piecewise linear, it can be represented as a minimum over a finite set of piecewise linear convex functions described by the related finite family of polyhedral subdifferentials. The exhauster conversion operator allows to obtain  symmetric local representation as the maximum over a family of polyhedral concave functions, and vice versa, where the families of sets remain finite and polyhedral. The Demyanov-Ryabova conjecture states that if this conversion operator is applied to a family of polyhedral sets sufficiently many times, the process will stabilise with a 2-cycle. Here we focus on a geometric formulation of this conjecture that does not rely on nonsmooth analysis background.

In this paper, we will first define the conversion operator and explain the statement of the conjecture. Then we will prove this conjecture in the special case. We will restrict the conjecture to the case with $n+1$ affinely independent vertices in an $n$ dimensional space, and then prove it is always true. In the final section of the paper, we will reformulate this geometric problem into an algebraic problem by considering the orderings on the vertex set and forming a simplified map. Then we will show the algebraic formulation and the geometric problem are equivalent.\\[+10pt]

\section*{Preliminaries}

Given a polyhedron and a direction, we can define the \emph{supporting face} of this polyhedron as the set of points which project the furthest along the given direction. \\[+5pt]
\begin{definition}\label{def:support-face}
Let $d \in \mathbb{S}^{n-1}$ be a direction and $P$ a polyhedron, we define $P_d$ be \emph{the supporting face of $P$ at direction $d$} (see Fig. \ref{fig:SupportFace}). That is,
$$P_d := \Argmax_{x \in P} \langle x, d \rangle  = \Argmax_{x \in P} (d^{T}x).$$
\end{definition}
Note that always $(P_d)_d = P_d$.
\begin{figure}[H]
    \begin{center}
	   \includegraphics[scale = 0.8]{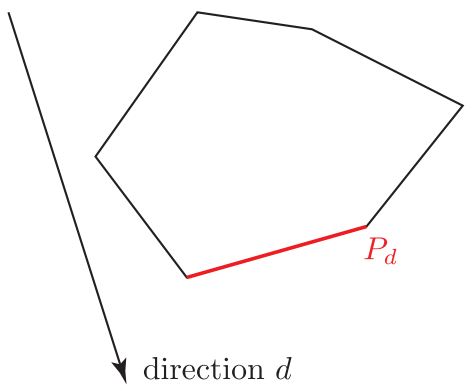}\\[+30pt]
    \end{center}
    \caption{Supporting face of a polyhedron}\label{fig:SupportFace}
\end{figure}
Let $\Omega$ be a finite family of polyhedra. Fix a direction $d$ and take the convex hull of all support faces in this directions for all polyhedra in the family. The new collection of sets generated in this fashion from all  directions $d\in \mathbb{S}^{n-1}$ is the output of Demyanov convertor.%\\[15pt]

\begin{definition}
	For a direction $d$ and a collection of polyhedra $\Omega$, we can define
	$$\Omega (d) : = Conv\big( \{P_d \ | \ P \in \Omega\} \big).$$
\end{definition}
\begin{definition}
	Define the \emph{transformation $F$} to be
    \begin{equation}\label{F}
	   F(\Omega) := \{\Omega(d) \ | \ d \in \mathbb{S}^{n-1}\}.
    \end{equation}
\end{definition}
Let $\Omega_{i+1} = F(\Omega_i)$ for $i = 0,1,2,...$. %\\[+25pt]
We are now ready to state the Demyanov-Ryabova conjecture.
\begin{conjecture}\label{conj}
	There exists $N \in \mathbb{N}$ such that if $ n > N$, then $\Omega_{n+2} = \Omega_{n}$.%\\[+5pt]
\end{conjecture}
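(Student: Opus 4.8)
The plan is to reduce the geometric dynamical system to a purely combinatorial one on collections of subsets of $[n+1]=\{0,1,\dots,n\}$, prove a structural ``swap + reduction'' lemma for the combinatorial convertor, and deduce that the orbit actually satisfies $\Phi^3=\Phi$, which is more than enough for the conjecture. Fix an affinely independent set $V=\{v_0,\dots,v_n\}\subset\mathbb R^n$; in the affinely independent setting every polyhedron in $\Omega_0$ is $\mathrm{Conv}\{v_i:i\in S\}$ for some $S\subseteq[n+1]$, and since $F$ never leaves the affine hull and $P_d=\mathrm{Conv}\{v_i:i\in\Argmax_{i\in S}\langle v_i,d\rangle\}$, the same is true of every $\Omega_i$. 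Identifying polyhedra with subsets, $F$ becomes $\Phi(\mathcal C)=\{\,T_W(\mathcal C)\,\}$, where $W$ runs over the total preorders of $[n+1]$ induced by unit directions and $T_W(\mathcal C)=\{i:i\text{ is }W\text{-maximal in some }S\in\mathcal C\}$. The first step is to pin down the admissible $W$: the arrangement of hyperplanes $\{d:\langle v_i-v_j,d\rangle=0\}$ realises the graphic matroid of $K_{n+1}$ — this is exactly where affine independence enters, since a linear relation $\sum c_e(v_{i_e}-v_{j_e})=0$ produces an affine relation among the $v_k$, hence forces $c$ into the cycle space of $K_{n+1}$ — so the arrangement is combinatorially the braid arrangement, its lineality space is $\{0\}$, and every face other than the origin meets $\mathbb S^{n-1}$. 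Thus the admissible $W$ are precisely all total preorders on $[n+1]$ except the all-tied one.

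The second step records the exhauster-theoretic identities that make everything move. For any collection $\mathcal C$ set $h^\uparrow_{\mathcal C}(x)=\min_{S\in\mathcal C}\max_{i\in S}\langle v_i,x\rangle$ and $h^\downarrow_{\mathcal C}(x)=\max_{S\in\mathcal C}\min_{i\in S}\langle v_i,x\rangle$. Every $T_W(\mathcal C)$ is a transversal of $\mathcal C$, so $\min_{i\in T_W(\mathcal C)}\langle v_i,x\rangle\le h^\uparrow_{\mathcal C}(x)$ and $\max_{i\in T_W(\mathcal C)}\langle v_i,x\rangle\ge h^\downarrow_{\mathcal C}(x)$ for every $W$ and every $x$; and the preorder induced by $x$ itself (respectively by $-x$) gives a $T_W(\mathcal C)$ on which these bounds are attained. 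Hence $h^\uparrow_{\Phi\mathcal C}=h^\downarrow_{\mathcal C}$ and $h^\downarrow_{\Phi\mathcal C}=h^\uparrow_{\mathcal C}$ for every collection $\mathcal C$, so $\Phi$ swaps the pair $(h^\uparrow,h^\downarrow)$ and $\Phi^2$ preserves it; moreover every member of $\Phi(\mathcal C)$ has support function $\ge h^\downarrow_{\mathcal C}$ and satisfies $\min_{i\in\cdot}\langle v_i,\cdot\rangle\le h^\uparrow_{\mathcal C}$, so $\Phi^2\mathcal C$ lies inside the fixed ``box'' $\mathcal E^\ast(h^\uparrow_{\mathcal C})\cap\mathcal E_\ast(h^\downarrow_{\mathcal C})$ of subsets whose support function dominates $h^\uparrow_{\mathcal C}$ and whose lower support function is dominated by $h^\downarrow_{\mathcal C}$, as does $\mathcal C$ itself.

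The key observation is that $\Phi(\mathcal C)$ depends on $\mathcal C$ only through the data $v\mapsto M_v(\mathcal C)$, the antichain of inclusion-minimal members of $\mathcal C$ containing $v$: indeed $i\in T_W(\mathcal C)$ iff $i$ is $W$-maximal in some member of $M_i(\mathcal C)$. Writing $\mathcal C^\flat=\bigcup_v M_v(\mathcal C)$ we get $\Phi(\mathcal C)=\Phi(\mathcal C^\flat)$ and $\mathcal C^{\flat\flat}=\mathcal C^\flat$. The proof therefore rests on the Key Lemma: $(\Phi^2\mathcal C)^\flat=\mathcal C^\flat$ for every $\mathcal C$ — for then $\Phi^3\mathcal C=\Phi\big((\Phi^2\mathcal C)^\flat\big)=\Phi(\mathcal C^\flat)=\Phi\mathcal C$, so $\Omega_{n+2}=\Omega_n$ for all $n\ge1$. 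The inclusion $(\Phi^2\mathcal C)^\flat\subseteq\mathcal C^\flat$ I would extract from the ``box'' constraint of the previous paragraph together with the transversal structure, which together prevent $\Phi^2\mathcal C$ from acquiring a set that is inclusion-minimal-at-some-vertex yet not already minimal in $\mathcal C$. For the reverse inclusion one must realise each $S\in M_v(\mathcal C)$ as $T_W(\Phi\mathcal C)$ for a suitable $W$: take $W$ refining the two-block order $(S\mid[n+1]\setminus S)$ by a tie-break order inside $S$; since each $T_{W'}(\mathcal C)$ meets $S$, its $W$-top lies in $S$, whence $T_W(\Phi\mathcal C)\subseteq S$, and the point is to choose the internal order of $S$ so that every element of $S$ is actually hit.

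I expect this last point — realising every minimal set $S$ as some $T_W(\Phi\mathcal C)$, i.e.\ the inclusion $\mathcal C^\flat\subseteq\Phi^2\mathcal C$ — to be the main obstacle. For each $i\in S$ one needs a preorder $W'$ for which some member of $\mathcal C$ has $i$ as its $W$-top, and these choices must be made consistently for all $i$ at once; this forces the tie-break order on $S$ to be compatible with the inclusion order of the minimal sets of $\mathcal C$ passing through the various $i$, and the genuinely delicate cases are the interaction with singletons $\{j\}\in\mathcal C$ (which lie in every $T_{W'}(\mathcal C)$ and hence constrain where $j$ may sit in the tie-break order) and the unavailability of the all-tied direction, which can remove the set $V$ itself from a convex hull. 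I would carry out this part entirely in the ``orderings'' reformulation promised in the Introduction, encoding $\mathcal C$ by the map $\prec\mapsto T_\prec(\mathcal C)$ on linear orders and $\Phi$ by the induced combinatorial substitution, where the required tie-break orders and the bookkeeping of singletons are most transparent; establishing the Key Lemma in that language is the remaining substantive work.
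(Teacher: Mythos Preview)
First, a framing point: the statement you are attempting is the paper's Conjecture~4, which the paper does \emph{not} prove in general. What the paper proves is the affinely independent (simplex) case, Theorem~9, and you are also explicitly working in that case; so the relevant comparison is with the paper's proof of Theorem~9.

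Your combinatorial translation is correct and matches the paper's algebraic reformulation (the fact that in the simplex case the admissible preorders are exactly all total preorders on $[n+1]$ except the all-tied one is right, and your identification $\Omega(d)\leftrightarrow T_W(\mathcal C)$ is exactly the $D_j$ of the paper). The exhauster identities $h^\uparrow_{\Phi\mathcal C}=h^\downarrow_{\mathcal C}$, $h^\downarrow_{\Phi\mathcal C}=h^\uparrow_{\mathcal C}$ are also fine, as is the observation $\Phi(\mathcal C)=\Phi(\mathcal C^\flat)$.

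The fatal gap is the Key Lemma $(\Phi^2\mathcal C)^\flat=\mathcal C^\flat$: it is \emph{false}, and in fact the inclusion you call routine --- $(\Phi^2\mathcal C)^\flat\subseteq\mathcal C^\flat$ --- is the one that fails, not the one you flag as the main obstacle. Take three vertices $A,B,C$ and $\mathcal C=\{\{A,B\},\{C\}\}$ (the paper's own Example~11). A direct computation with all twelve non-trivial preorders gives
\[
\Phi(\mathcal C)=\{\,AC,\;BC,\;ABC\,\},\qquad
\Phi^2(\mathcal C)=\{\,AB,\;AC,\;BC,\;C,\;ABC\,\}.
\]
Here $\mathcal C^\flat=\{AB,C\}$, whereas $(\Phi^2\mathcal C)^\flat=\{AB,AC,BC,C\}$: the set $AC$ is inclusion-minimal at $A$ in $\Phi^2\mathcal C$ but is nowhere in $\mathcal C^\flat$. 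So the ``box'' constraint together with transversality does \emph{not} prevent $\Phi^2\mathcal C$ from acquiring new minimal-at-a-vertex members. (Note that $\Phi^3\mathcal C=\Phi\mathcal C$ still holds in this example, so the conclusion you aim for may well be salvageable, but not through this invariant.)

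For contrast, the paper's argument is entirely different and does not attempt to control minimal members. It proves a one-sided monotonicity for supporting faces of the ambient simplex $C$ (Lemma~6: $C_d\in\Omega_n\Rightarrow C_d\in\Omega_{n+2}$), upgrades it to eventual two-sided stability by finiteness (Proposition~7), and then exploits the key geometric fact that in a simplex \emph{every proper face is a supporting face} $C_d$, so Proposition~7 already handles all proper sub-simplices; the only remaining set is $C$ itself, which is dealt with by a short separate contradiction argument. Nothing like your $\flat$-reduction or the Key Lemma appears there.
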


In the sequel we will use the following two reformulations of Conjecture~\ref{conj}.
\begin{lemma}\label{lem:conj-equiv} Let $\Omega_0$ be a finite family of polyhedral sets in $\mathbb{R}^n$. Conjecture~\ref{conj} is equivalent to each of the following statements.
\begin{itemize}
    \item[(1)] There exist an $N \in \mathbb{N}$ such that if $n > N$, then any polyhedron $P$ satisfies $P \in \Omega_n \Leftrightarrow P \in \Omega_{n+2}$.
    \item[(2)] Given a polyhedron $P$. Then there exist $N \in \mathbb{N}$ such that if $n > N$, then $P \in \Omega_n \Leftrightarrow P \in \Omega_{n+2}$.
\end{itemize}
\end{lemma}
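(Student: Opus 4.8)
The plan is to prove the two equivalences through the chain Conjecture~\ref{conj}~$\Leftrightarrow$~(1)~$\Leftrightarrow$~(2), where the first equivalence is essentially a restatement of what it means for two families of sets to coincide, the implication (1)~$\Rightarrow$~(2) is immediate, and the real content lies in (2)~$\Rightarrow$~(1), which I would deduce from the fact that all the iterates $\Omega_n$ are drawn from a single fixed \emph{finite} pool of polyhedra.

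For the first equivalence, note that each $\Omega_n$ is a family of sets, so $\Omega_{n+2} = \Omega_n$ holds precisely when the two families have the same members, i.e. precisely when every polyhedron $P$ satisfies $P \in \Omega_n \Leftrightarrow P \in \Omega_{n+2}$; thus Conjecture~\ref{conj} and statement (1) are verbatim the same assertion. For (1)~$\Rightarrow$~(2): a threshold $N$ that works for \emph{all} $P$ in particular works for any single prescribed $P$, so (2) holds with the same $N$.

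The substantive step is (2)~$\Rightarrow$~(1), and the key preliminary fact I would establish is that there is a finite collection $\mathcal{U}$ of polyhedra with $\Omega_n \subseteq \mathcal{U}$ for every $n \ge 0$. Let $V$ be the (finite) set of all vertices of all polyhedra in $\Omega_0$, and set $\mathcal{U} := \{\,\mathrm{Conv}(S) \mid S \subseteq V\,\}$, so $|\mathcal{U}| \le 2^{|V|}$. I would prove by induction on $n$ that every member of $\Omega_n$ equals $\mathrm{Conv}(S)$ for some $S \subseteq V$: this holds for $\Omega_0$ by the definition of $V$; and if it holds for $\Omega_n$, then for any $d$ and any $P \in \Omega_n$ the support face $P_d$ is a face of $P$, so its vertices are vertices of $P$ and hence lie in $V$, whence $\Omega_n(d) = \mathrm{Conv}\!\big(\bigcup_{P \in \Omega_n} P_d\big)$ is again of the form $\mathrm{Conv}(S)$ with $S \subseteq V$; therefore every member of $\Omega_{n+1} = F(\Omega_n)$ lies in $\mathcal{U}$. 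As a by-product this reproves that each $\Omega_n$ is finite, since $\mathcal{U}$ is.

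Granting this, (2)~$\Rightarrow$~(1) is short: statement (2) assigns to each of the finitely many $P \in \mathcal{U}$ a threshold $N_P$, and I put $N := \max_{P \in \mathcal{U}} N_P$. If $n > N$, then for $P \in \mathcal{U}$ we have $P \in \Omega_n \Leftrightarrow P \in \Omega_{n+2}$ because $n > N_P$, while for $P \notin \mathcal{U}$ both memberships fail and the equivalence is vacuous; so this single $N$ witnesses (1). I expect the only genuine obstacle to be the finite-pool claim, and within it the standard polytope facts that a vertex of a face of a polytope is a vertex of that polytope and that $\Omega_n(d)$ takes only finitely many values as $d$ ranges over the sphere; everything else is bookkeeping, and this is also the point at which a little extra care with recession directions would be needed if unbounded polyhedra were admitted.
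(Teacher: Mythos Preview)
Your proposal is correct and follows essentially the same approach as the paper: both argue Conjecture~$\Leftrightarrow$~(1) trivially, note (1)~$\Rightarrow$~(2) is immediate, and obtain (2)~$\Rightarrow$~(1) by observing that only finitely many polyhedra (convex hulls of subsets of the original vertex set) can ever appear, so one may take the maximum of the individual thresholds $N_P$. The paper states this finiteness in a single sentence, whereas you spell out the inductive argument and the construction of $\mathcal{U}$ explicitly, but the underlying idea is identical.
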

\begin{proof}
It is evident that statement (1) is equivalent to Conjecture~\ref{conj}, also (1) is stronger than (2). Statement (2) yields (1) due to the finiteness of our setting: there are finitely many polyhedra that can be formed on a finite set of vertices, hence, we only need to check (2) for finitely many polyhedra, hence there exists $N$ for which (2) holds for all $P$ in this finite collection, which we can then substitute in (1).
\end{proof}

Observe that $Conv(F(X)) = Conv(X)$ for any set of polyhedra $X$. So $Conv(\Omega_i)$ is constant. We let $C = Conv(\Omega_0) = Conv(\Omega_i)$ for all $i\in \mathbb{N}$, and by $C_d$ we denote the  supporting face of $C$ in direction $d$ in alignment with the notation of Definition~\ref{def:support-face}.

\begin{example}
For the example shown in Figure~\ref{fig:five-sets}, $C$ is the convex hull of 5 convex sets (i.e. a single vertex, two line segments, a triangle, and a rectangle) in $\mathbb{R}^2$. Every edge and vertex in $C$ is a supporting face for some direction $d$.
\begin{figure}[H]
	\begin{center}
		\includegraphics[scale = 0.8]{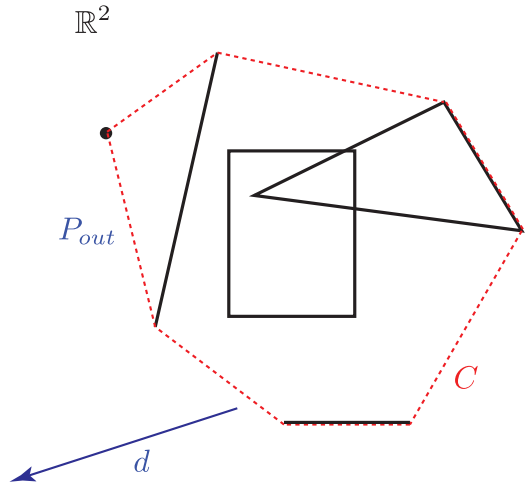}\\[+30pt]
	\end{center}
    \caption{Convex hull $C$ of 5 sets}\label{fig:Conv5Sets}\label{fig:five-sets}
\end{figure}
\end{example}

\section*{Affinely independent case}

The main goal of this section is to prove that Conjecture \ref{conj} is true for affinely independent case, i.e. when all vertices in of the polyhedra in our family form an affinely independent set. %that is, if we have $n+1$ affinely independent vertices in an $n$ dimensional space, then the conjecture is true.\\[+5pt]
We begin with several technical claims and finish with the proof of the main result in  Theorem~\ref{thm:main-affine}.

\begin{lemma}\label{Cd_in_omega}
	If $C_d \in \Omega_n$, then $C_d \in \Omega_{n+2}$.
\end{lemma}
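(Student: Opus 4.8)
The plan is to prove the stronger statement $\Omega_{n+1}(d)=C_d$, which at once yields $C_d\in\{\Omega_{n+1}(e):e\in\mathbb{S}^{n-1}\}=\Omega_{n+2}$. Throughout I would set $m:=\max_{x\in C}\langle x,d\rangle$, so that $C_d=\{x\in C:\langle x,d\rangle=m\}$, and use the following elementary bound repeatedly: since $Conv(\Omega_i)=C$ for every $i$, any polyhedron $R$ occurring in any $\Omega_i$ satisfies $R\subseteq C$, hence $\max_{x\in R}\langle x,d\rangle\le m$; and if this maximum equals $m$, then $R_d\subseteq\{x\in C:\langle x,d\rangle=m\}=C_d$.

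The key step is to show $\Omega_{n+1}(d)\subseteq C_d$. Fix a direction $f$; every element of $\Omega_{n+1}$ has the form $R=\Omega_n(f)=Conv(\{P_f:P\in\Omega_n\})$. Because $C_d\in\Omega_n$, the face $(C_d)_f$ is one of the support faces whose convex hull forms $\Omega_n(f)$, so $\emptyset\ne(C_d)_f\subseteq\Omega_n(f)$; and since $(C_d)_f\subseteq C_d$, every point of $(C_d)_f$ has inner product exactly $m$ with $d$. Hence $\max_{x\in\Omega_n(f)}\langle x,d\rangle=m$, and the elementary bound gives $(\Omega_n(f))_d\subseteq C_d$. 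As $f$ was arbitrary, $R_d\subseteq C_d$ for every $R\in\Omega_{n+1}$, and therefore $\Omega_{n+1}(d)=Conv\big(\bigcup_{R\in\Omega_{n+1}}R_d\big)\subseteq C_d$.

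For the reverse inclusion I would single out the polyhedron $\Omega_n(d)\in\Omega_{n+1}$. Since $C_d\in\Omega_n$, the set $(C_d)_d=C_d$ is among the support faces forming $\Omega_n(d)$, so $C_d\subseteq\Omega_n(d)$; combined with $\Omega_n(d)\subseteq C$ this forces $\max_{x\in\Omega_n(d)}\langle x,d\rangle=m$, and then the elementary bound together with $C_d\subseteq\Omega_n(d)$ gives $(\Omega_n(d))_d=C_d$. Consequently $\Omega_{n+1}(d)\supseteq(\Omega_n(d))_d=C_d$. Combining the two inclusions, $\Omega_{n+1}(d)=C_d$, hence $C_d\in\Omega_{n+2}$, as desired.

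The heart of the argument — and the only place requiring an idea — is the key step: the hypothesis $C_d\in\Omega_n$ ``propagates'' in the sense that $(C_d)_f\subseteq\Omega_n(f)$ for every $f$, which forces every set in $\Omega_{n+1}$ to attain the value $m$ in direction $d$ and hence to have its $d$-support face contained in $C_d$; once this is observed, re-applying $F$ with the very same direction $d$ collapses the entire collection $\Omega_{n+1}$ onto $C_d$. I note that this lemma needs no affine-independence assumption; that hypothesis enters only in the proof of Theorem~\ref{thm:main-affine}.
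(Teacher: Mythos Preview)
Your proof is correct and follows essentially the same route as the paper's: both argue the stronger identity $\Omega_{n+1}(d)=C_d$ by proving the two inclusions, using that $C_d\in\Omega_n$ forces every $R=\Omega_n(f)\in\Omega_{n+1}$ to meet $C_d$ (via $(C_d)_f\subseteq R$), hence $R_d\subseteq C_d$, and that $\Omega_n(d)\supseteq C_d$ gives the reverse inclusion. Your write-up is in fact cleaner---you introduce a separate symbol $f$ for the direction defining an arbitrary element of $\Omega_{n+1}$, whereas the paper reuses $d$ and thereby makes the logic harder to follow.
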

\begin{proof}
Let $C_d \in \Omega_n$, where $d$ is a direction.\\[+5pt]
Then we know that $(C_d)_d = C_d$. \\[+5pt]
By definition of $\Omega(d)$, we have
$$\Omega_n(d) = Conv\big( \{P_d \ | \ P \in \Omega_n\} \big)$$
By assumption $C_d \in \Omega_{n}$, we have
$$C_d = (C_d)_{d} \subseteq \Omega_n(d)$$
Also $\Omega_n(d) \subseteq C$, but $(C_d)_d = C_d$, therefore
$$C_d = \Omega_n(d)_d$$
Since $\Omega_n(d) \in \Omega_{n+1}$, we have one inclusion
$$\Omega_{n+1}(d) \supseteq \Omega_n(d)_d = C_d.$$\\[+5pt]
Now we will show the other inclusion $\Omega_{n+1}(d) \subseteq C_d$.\\[+5pt]
This is equivalent of showing: If $P \in \Omega_{n+1}$, then $P_d \subseteq C_d$.\\[+5pt]
For a given $d \in \mathbb{S}^{n-1}$, there exists $P \in \Omega_{n+1}$ such that $P = \Omega_n(d)$.\\[+5pt]
Therefore,
$$P = \Omega_n(d) = Conv\big( \{ P_d \ | \ P \in \Omega_n \} \big) \supseteq P_d$$
Since $P_d \subseteq (P \cap C_d)$, we get $P \cap C_d \neq \emptyset$.\\[+5pt]
Hence,
$$P_d = (P \cap C_d) \subseteq C_d \ \ \ \text{ (by definition of the supporting face) }$$\\[+5pt]
Now we have both inclusions, we have shown that $C_d = \Omega_{n+1}(d)$ for any given $d$.\\[+5pt]
Given the fact $\Omega_{n+1}(d) \in \Omega_{n+2}$, we have the result $C_d \in \Omega_{n+2}$.
\end{proof}

\begin{proposition}\label{iff_p_out_in_Omega}
	There exists $N \in \mathbb{N}$ such that if $n > N$, then $C_d \in \Omega_n$ if and only if $C_d \in \Omega_{n+2}$.
\end{proposition}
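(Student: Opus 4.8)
The plan is to leverage Lemma~\ref{Cd_in_omega} for one direction of the equivalence and to extract the reverse implication from the eventual periodicity of membership of the \emph{finitely many} faces $C_d$ of the fixed polytope $C$. Since $C = Conv(\Omega_i)$ is constant, it has only finitely many distinct supporting faces $C_{d}$ (one for each face of the polytope $C$); call this finite collection $\mathcal{F}$. For each fixed $F = C_d \in \mathcal{F}$, consider the Boolean sequence $b_i = \mathbf{1}[\,F \in \Omega_i\,]$. Lemma~\ref{Cd_in_omega} tells us that $b_n = 1 \Rightarrow b_{n+2} = 1$: once a supporting face of $C$ appears at some step, it reappears two steps later, and hence at every step of the same parity thereafter. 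So for each $F$ the sequence $(b_i)$ is eventually constant on each parity class, i.e. there is $N_F$ past which $b_i = b_{i+2}$ for all $i > N_F$.

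The key step is then simply to take $N := \max_{F \in \mathcal{F}} N_F$, which is finite because $\mathcal F$ is finite. For $n > N$ and any supporting face $C_d$ of $C$, we get $C_d \in \Omega_n \Leftrightarrow C_d \in \Omega_{n+2}$: the forward implication is Lemma~\ref{Cd_in_omega}, and the backward implication is exactly the statement that $b_i$ has stabilised on its parity class beyond $N_F$. I would write the monotonicity-plus-finiteness argument carefully: the sequence $b_0, b_2, b_4, \dots$ (resp. $b_1, b_3, b_5, \dots$) is nondecreasing as a $\{0,1\}$-sequence by Lemma~\ref{Cd_in_omega}, so it is eventually constant, and an eventually-constant $\{0,1\}$ sequence stabilises after finitely many steps; that gives $N_F$ explicitly (at most twice the number of steps in $\mathcal F$, really at most $|\mathcal F|$ or so, though a crude bound suffices).

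The main obstacle — or rather the main thing to be careful about — is making sure the quantifier ``there exists $N$'' is uniform over \emph{all} directions $d$, not just over a single $d$. This is where finiteness of the face lattice of the fixed polytope $C$ is essential: although there are infinitely many directions $d \in \mathbb{S}^{n-1}$, they induce only finitely many distinct supporting faces $C_d$, so the supremum of the $N_F$ is over a finite set and hence finite. Everything else is bookkeeping: Lemma~\ref{Cd_in_omega} does the geometric heavy lifting, and the proposition is its combinatorial corollary via the pigeonhole/monotonicity principle. One should also note that $C_d \in \Omega_n$ makes sense because each $\Omega_n$ is a finite family of polyhedra all contained in $C$, so membership of the particular polyhedron $C_d$ is well-defined.
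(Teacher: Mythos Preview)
Your argument is correct and follows essentially the same route as the paper: use Lemma~\ref{Cd_in_omega} to see that the indicator sequence $b_i = \mathbf{1}[C_d \in \Omega_i]$ is nondecreasing on each parity class and hence eventually constant, then read off the required $N$. The only difference is that you go a step further and make $N$ uniform over all directions $d$ by invoking finiteness of the face lattice of $C$; the paper instead proves the proposition for a single fixed $C_d$ and postpones the uniformisation (taking a maximum over finitely many sub-simplices) to the proof of Theorem~\ref{thm:main-affine}.
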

\begin{proof}
By the Lemma \ref{Cd_in_omega}, we have either:
\begin{itemize}
	\item[(a)] $C_d \notin \Omega_{0}$, $C_d \notin \Omega_{2}$, ... , $C_d \in \Omega_{2k+2}$, $C_d \in \Omega_{2k+4}$, ... for some $k \in \mathbb{N}$.
    \item[(b)] $C_d \notin \Omega_{2k}$ for all $k \in \mathbb{N}$, and $C_d \in \Omega_{2k+1}, C_d \in \Omega_{2k+3}, ...$ for some $k$.
	\item[(c)] $C_d \in \Omega_{2k}$ for all $k \in \mathbb{N}$.
\end{itemize}
In any of these cases, there exist $N_1 \in \mathbb{N}_{>0}$ such that if $n > N_1$, and $2$ divides $n$, then, $$C_d \in \Omega_{n} \Leftrightarrow C_d \in \Omega_{n+2}.$$
Similarly, there exist $N_2 \in \mathbb{N}_{>0}$ such that if $n > N_2$ and $2$ does not divide $n$, then,
$$C_d \in \Omega_{n} \Leftrightarrow C_d \in \Omega_{n+2}.$$
Therefore, we can set $N = \max\{N_1, N_2\}$, which proves the proposition.
\end{proof}
Recall the definition of simplex below:
\begin{definition}
    Let $k+1$ points $v_0, v_1, ... , v_k \in \mathbb{R}^n$ be affinely independent. Then, the simplex determined by this set of points is:
    $$C = \{\lambda_0 v_0 + \cdots + \lambda_k v_k \ | \ \lambda_i \geq 0, \ \sum_{i = 0}^k \lambda_i = 1\}.$$
\end{definition}
In other words, a simplex is the generalisation of a tetrahedral region of spaces to an arbitrary dimension. A \emph{k-simplex} is a $k$-dimensional polytope that is a convex hull of its $k+1$ vertices. Observe that every face of a simplex (\emph{sub-simplex}) is still a simplex in its lower dimensional space.\\[+10pt]

\begin{theorem}\label{thm:main-affine}
    If $C$ is a simplex, and each $P \in \Omega$ is a sub-simplex of $C$, then the conjecture is true.
\end{theorem}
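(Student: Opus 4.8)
The plan is to leverage the equivalence in Lemma~\ref{lem:conj-equiv}(2), so that it suffices to fix one sub-simplex $P$ of $C$ and show that membership in $\Omega_n$ stabilises with period dividing $2$ for large $n$. The key structural observation I would exploit is that when $C$ is a simplex with vertex set $V = \{v_0,\dots,v_k\}$, every supporting face $C_d$ is itself a sub-simplex spanned by a \emph{subset} of $V$, and in fact the faces of $C$ are in bijection with subsets of $V$. Consequently every polyhedron that can ever appear in any $\Omega_i$ is a sub-simplex on a subset of $V$ (the operator $F$ only ever produces convex hulls of supporting faces of members, and supporting faces of a sub-simplex are sub-simplices on subsets of its own vertex set; taking convex hulls of several such, inside the simplex $C$, again yields a sub-simplex). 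So the entire dynamical system lives on the finite lattice of subsets of $V$, and I would track each polyhedron by its vertex set $S \subseteq V$.

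Next I would make $F$ explicit at the level of vertex sets. Given a collection $\Omega_i$, identified with a family $\mathcal{S}_i$ of subsets of $V$, and a direction $d$, the supporting face of the sub-simplex on $S$ is the sub-simplex on $\Argmax_{v \in S}\langle v,d\rangle$; since the $v$'s are affinely independent this argmax is generically a single vertex, but for the lower-dimensional faces that matter it is the set of $v \in S$ on which the linear functional $\langle\cdot,d\rangle$ is maximal. Then $\Omega_i(d)$ corresponds to the union $\bigcup_{S \in \mathcal{S}_i}\big(\Argmax_{v\in S}\langle v,d\rangle\big)$. Thus $F$ sends $\mathcal{S}_i$ to the family $\mathcal{S}_{i+1} = \{\,\bigcup_{S\in\mathcal{S}_i}\Argmax_{v\in S}\,L \ : \ L \text{ a linear functional on } \mathbb{R}^n\,\}$. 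Ordering $V$ by the values of $L$ reduces each such $L$ to a total preorder (generically a total order) on $V$, so $\mathcal{S}_{i+1}$ is determined by which "take the top element of each $S$, relative to a given ordering, and union them" sets arise. This is exactly the combinatorial map the paper promises to extract in its last section, so I would set it up here in the simplex case and then argue the $2$-cycle directly.

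For the stabilisation argument itself I would proceed in two parts. First, Proposition~\ref{iff_p_out_in_Omega} already gives the conclusion for the polyhedra of the form $C_d$ — i.e. for every sub-simplex that is actually a face of $C$. But when $C$ is a simplex \emph{every} sub-simplex of $C$ is a face of $C$: a sub-simplex on $S\subseteq V$ is precisely the face of $C$ exposed by any direction $d$ that is strictly larger on $S$ than on $V\setminus S$, and such a $d$ exists by affine independence. Hence every polyhedron $P$ that can appear in the system is of the form $C_d$ for some $d$, and Proposition~\ref{iff_p_out_in_Omega} applied to each of the finitely many such $P$ gives a uniform $N$; combining with Lemma~\ref{lem:conj-equiv}(2) (or directly with part (1), taking the max over the finite list) finishes the proof.

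The main obstacle I anticipate is the claim that in the simplex case every polyhedron ever produced is a face of $C$, equivalently that $\Omega_i(d)$ is always a genuine face of $C$ rather than merely a sub-simplex "in the interior." The subtlety is that $\Omega_i(d)$ is a convex hull of \emph{several} supporting faces of \emph{different} members, and a priori its vertex set $S = \bigcup_{P\in\Omega_i}\big(\text{vertices of }P_d\big)$ need not be "downward closed" in the ordering induced by $d$ — so one must check that $S$ is nonetheless the vertex set of an exposed face of $C$. Here the key point to verify is that each $P_d$ sits inside $C_d$ (this is already used inside the proof of Lemma~\ref{Cd_in_omega}), so $S \subseteq$ (vertex set of $C_d$), and then a perturbed direction $d'$ that breaks remaining ties in favour of exactly the vertices in $S$ exposes the sub-simplex on $S$ as a face of $C_d$, hence of $C$. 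Making this tie-breaking argument precise, and confirming it respects affine independence, is where the real work lies; once it is in place, the reduction to Proposition~\ref{iff_p_out_in_Omega} is immediate.
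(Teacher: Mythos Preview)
Your reduction to Proposition~\ref{iff_p_out_in_Omega} via ``every sub-simplex of $C$ is a supporting face $C_d$'' is exactly the paper's route for \emph{proper} sub-simplices, and that part is fine. The gap is the case $P=C$. When $C$ is full-dimensional (the natural affinely-independent setting with $n+1$ vertices in $\mathbb{R}^n$), there is no $d\in\mathbb{S}^{n-1}$ with $C_d=C$, so Proposition~\ref{iff_p_out_in_Omega} simply does not apply to $C$. And $C$ genuinely can occur in the sequence: in the paper's own Example~\ref{alg_exp} one gets $ABC\in\Omega_3'$. So you cannot dispose of $C$ by the same mechanism.

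Your ``main obstacle'' paragraph tries to rule this out by arguing $\Omega_i(d)$ is always a proper face of $C$, using ``each $P_d$ sits inside $C_d$.'' That inclusion is false in general: take $C=\mathrm{Conv}\{A,B,C\}$, $P=\mathrm{Conv}\{B,C\}$, and $d$ with $\langle A,d\rangle>\langle B,d\rangle>\langle C,d\rangle$; then $P_d=\{B\}$ while $C_d=\{A\}$. (In Lemma~\ref{Cd_in_omega} the inclusion is used only for the particular $P=\Omega_n(d)$, which already contains $C_d$, so it holds there for a special reason.) Consequently $\Omega_i(d)$ can pick up one vertex from each member and produce all of $V$, i.e.\ $C$ itself.

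The paper closes this gap with a separate short argument for $P=C$: after fixing $N$ large enough that the equivalence holds for every \emph{proper} sub-simplex, it shows by contradiction that $C\in\Omega_N\not\Leftrightarrow C\in\Omega_{N+2}$ would force some proper sub-simplex to violate its already-established stability one step earlier, cascading back to a contradiction. You should add this (or an equivalent) step; the rest of your outline matches the paper.
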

\begin{proof}
By induction, we can show that every $P \in \Omega_i$ and every $P_d$ for any direction $d$ is a sub-simplex of $C$.\\[+3pt]
For any sub-simplex $P$ of $C$, with $P \neq C$, $P$ is a supporting face of $C$. Therefore, there exist $N_P$ such that for $n \geq N_P$, we have
$$P \in \Omega_n \Leftrightarrow P \in \Omega_{n+2} \ \ \ \ \text{(by Proposition \ref{iff_p_out_in_Omega})}.$$
Let
$$N := \max\{N_P \ | \ P \text{ is a sub-simplex of }C, P \neq C\} + 2.$$
Then $\Omega_{N} = \Omega_{N+2}$ since $N-2 \geq N_P$ for any sub-simplex $P$ of $C$ satisfying $P \neq C$.\\[+5pt]
Therefore, $P \in \Omega_{N-2} \Leftrightarrow P \in \Omega_{N}$, $P \in \Omega_{N-1} \Leftrightarrow P \in \Omega_{N+1}$ and $P \in \Omega_N \Leftrightarrow P \in \Omega_{N+2}$.\\[+15pt]
We have shown the statement for proper faces, now we will show it is true for $C$, which is
$$C \in \Omega_N \Leftrightarrow C \in \Omega_{N+2}.$$
Suppose $C \in \Omega_N$ and $C \notin \Omega_{N+2}$, then $C = \Omega_{N-1}(d)$ for some direction $d$.\\[+5pt]
But $\Omega_{N+1}(d) \neq C$ since $C \notin \Omega_{N+2}$ by assumption.\\[+5pt]
Let $a \in C$ be a vertex with $a \notin \Omega_{N+1}(d)$, then $a \in \Omega_{N-1}(d)$ implies $a \in P_d$ for some $P \in \Omega_{N-1}$.\\[+5pt]
Since $a \notin \Omega_{N+1}(d)$, then $P \notin \Omega_{N+1}$.\\[+5pt]
Since $P \in \Omega_{N-1}$ and $P \notin \Omega_{N+1}$, we have
$$P = C \ \ \ \text{(Otherwise we get }P \in \Omega_{N-1} \Leftrightarrow P \in \Omega_{N+1}).$$
Hence $C \in \Omega_{N-1}$ and $C \notin \Omega_{N+1}$.\\[+5pt]
Therefore $C \in \Omega_N$ and $C \notin \Omega_{N+2}$ implies $C \in \Omega_{N-1}$ and $C \notin \Omega_{N+1}$.\\[+10pt]
Similarly, $C \in \Omega_{N-1}$ and $C \notin \Omega_{N+1}$ implies $C \in \Omega_{N-2}$ and $C \notin \Omega_{N}$.\\[+5pt]
Thus $C \in \Omega_{N}$ and $C \notin \Omega_{N+2}$ implies $C \notin \Omega_{N}$, a contradiction. \\[+10pt]
Therefore,
$$C \in \Omega_{N} \Rightarrow C \in \Omega_{N+2}.$$
By similar argument,
$$C \in \Omega_{N+2} \Rightarrow C \in \Omega_{N}.$$
Since $C \in \Omega_{N+2}$ and $C \notin \Omega_N$ implies $C \in \Omega_{N+1}$ and $C \notin \Omega_{N-1}$, this implies $C \in \Omega_{N}$ and $C \notin \Omega_{N-2}$, which contradicts $C \notin \Omega_{N}$.\\[+5pt]
Therefore $C \in \Omega_N \Leftrightarrow C \in \Omega_{N+2}$, which implies $\Omega_{N} = \Omega_{N+2}$. \end{proof}

\section*{Algebraic reformulation of the conjecture using orderings on vertex set}
We can formulate this geometric problem into an algebraic problem by ordering the vertex set.\\[+5pt]
Firstly, we label all the vertices of the polyhedra in $\Omega_0$, the order doesn't matter.\\[+5pt]
After that, we pick a direction $d$, then we can ``encode" $d$ by writing the vertex set in order of furthest to closest along the $d$ direction.\\[+5pt]
We ignore the directions such that having more than one vertex are furthest along the direction. In other words, we ignore the directions perpendicular to edges of the polyhedra.\\[+5pt]
Then we know that based on the description of the transformation, every direction gives a convex hull. For each direction $d$, we compare the encoded word of the direction with the polyhedra from the previous state, then we can write down the precise vertex set of the convex hull that is created.\\[+5pt]

\begin{lemma}
    Let $n$ be the number of vertices in $\mathbb{R}^2$. If there are no more than two vertices collinear, then we have exactly $n(n-1)$ number of directions.
\end{lemma}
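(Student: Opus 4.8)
The plan is to count these directions by identifying them with the connected arcs into which $\mathbb{S}^{1}$ is partitioned by a finite ``forbidden'' set, and then to show that this set has exactly $n(n-1)$ points (throughout, $n$ denotes the number of vertices and $n\ge 2$).

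First I would pin down the forbidden set. A direction $d\in\mathbb{S}^{1}$ is discarded precisely when some two distinct vertices $v_{i},v_{j}$ satisfy $\langle v_{i},d\rangle=\langle v_{j},d\rangle$, i.e. when $d\perp(v_{i}-v_{j})$. Each of the $\binom{n}{2}$ unordered pairs of vertices imposes one such linear condition, whose zero set meets $\mathbb{S}^{1}$ in a pair of antipodal points; hence the forbidden set $\mathcal{B}\subset\mathbb{S}^{1}$ has at most $2\binom{n}{2}=n(n-1)$ elements.

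The crux is that these $n(n-1)$ points are pairwise \emph{distinct}. Two of them collide exactly when two distinct vertex pairs $\{v_{i},v_{j}\}$ and $\{v_{k},v_{l}\}$ have parallel difference vectors. If the two pairs share a vertex this forces three vertices to be collinear, which the hypothesis forbids; the remaining, disjoint, case additionally requires that no two of the $\binom{n}{2}$ segments spanned by the vertices be parallel. (This is genuinely needed: a parallelogram has no three vertices collinear yet admits strictly fewer than $n(n-1)$ orderings.) I would therefore read the hypothesis as the general-position statement ruling out both kinds of collision — equivalently, assume the vertex coordinates generic — and conclude that $\mathcal{B}$ consists of exactly $n(n-1)$ points, cutting $\mathbb{S}^{1}$ into exactly $n(n-1)$ open arcs.

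Finally I would match arcs with directions. Along a single arc no point of $\mathcal{B}$ is crossed, so the ordering of $\langle v_{1},d\rangle,\dots,\langle v_{n},d\rangle$ is constant; thus each arc carries a well-defined word, and every realizable word occurs on some arc. Conversely, the set of directions realizing a prescribed order $v_{\sigma(1)},\dots,v_{\sigma(n)}$ is the intersection of the open half-circles $\{d:\langle v_{\sigma(k)}-v_{\sigma(k+1)},d\rangle>0\}$ over $k=1,\dots,n-1$, hence the trace on $\mathbb{S}^{1}$ of a convex cone, which is a single arc; being disjoint from $\mathcal{B}$, it lies inside one of the $n(n-1)$ arcs. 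So ``arc $\mapsto$ its word'' is a bijection onto the admissible words, and there are exactly $n(n-1)$ directions. I expect the only real difficulty to be the distinctness step — more precisely, committing to the exact non-collinearity/non-parallelism hypothesis under which the bound $2\binom{n}{2}$ is attained with equality rather than merely as an upper bound.
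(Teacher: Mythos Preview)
Your approach is essentially the same as the paper's: both count the $n(n-1)$ critical directions on $\mathbb{S}^{1}$ at which the ordering of the vertices changes (the paper phrases this as ``each pair of letters swaps exactly twice'' while rotating $d$, which is exactly your count of the forbidden set $\mathcal{B}$), and then identify orderings with the arcs in between. Your write-up is more explicit about the arc--ordering bijection, and you are right to flag that ``no three collinear'' alone does not force the $n(n-1)$ points of $\mathcal{B}$ to be distinct --- the paper does not address this inside the proof, but immediately afterward adds a note conceding that parallel vertex pairs reduce the count and that $n(n-1)$ is only the general upper bound.
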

\begin{proof}
Let $d$ be an arbitrary direction. Then we can rotate $d$ clockwise to obtain all directions. \\[+3pt]
We can encode $d$ by writing the vertex set in order of furthest along $d$ to closest along $d$. As we rotate the direction $d$ clockwise, each pair of letters swaps exactly twice. This implies that there are $2 \times {n \choose2} = n(n-1)$ swaps in total.\\[+5pt]
Also we know that each swap gives a new ordering on the vertex set. Therefore, there are $n(n-1)$ vertex orders in total.
\end{proof}
\emph{Note}: If there are 3 or more vertices collinear, or two or more pairs of collinear vertices are parallel to each other, then the number of orders for vertex set would be less than $n(n-1)$, as some of the swaps would happen the same time as we rotate the direction around the $\mathbb{R}^2$ plane.\\[+5pt]
Therefore, the upper bound of the number of the directions is $n(n-1)$ for the general case.\\[+10pt]

\begin{example}\label{alg_exp}
    Consider the following example in $\mathbb{R}^2$ which start with a set contains a line segment and a single vertex in $\mathbb{R}^2$. Then we name the three vertices as $A, B$, and $C$.
    \begin{figure}[H]
    \begin{center}
        \includegraphics[scale = 0.8]{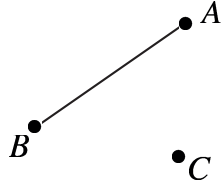}
    \end{center}
    \caption{Two sets with three vertices in $\mathbb{R}^2$}\label{fig:Sets3Vertices}
    \end{figure}
    Then we pick a direction $d$, and encode the direction use a word in terms of vertices in order of furthest along the $d$ to closest along the direction.
    \begin{figure}[H]
    \begin{center}
        \includegraphics[scale = 0.8]{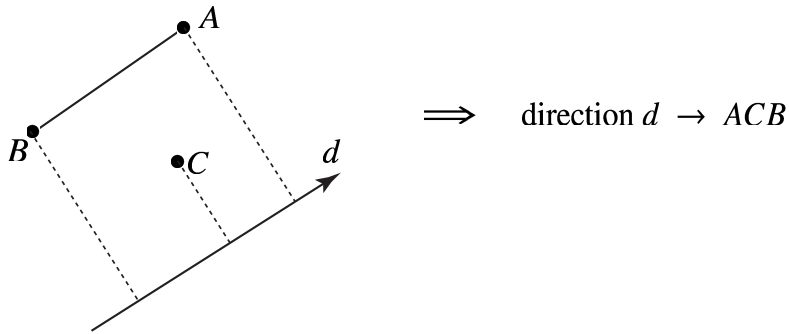}
    \end{center}
    \caption{Encode the direction $d$}\label{fig:EncodeDirectd}
    \end{figure}
    There will be 6 directions in total, which are:
    $$ACB, \ ABC, \ BCA, \ BAC, \ CAB, \ CBA$$
    The starting set of polyhedra is $\Omega_0 = \{AB, C\}$.\\[+5pt]
    Now suppose we want to know what is the convex hull created by direction $ACB$. We move in order along each letter in the direction $ACB$, and check with each polyhedron in $\Omega_0$. ``$A$" is in polyhedron ``$AB$", then we stop and move onto the next polyhedron in $\Omega_0$. ``$A$" is not in polyhedron ``$C$", so we move to the next letter in the direction, which is ``$C$", ``$C$" is in polyhedron ``$C$". We can stop now as we have exhausted polyhedra in $\Omega_0$. We can conclude that the convex hull created by direction $ACB$ is $AC$.\\[+5pt]
    Similarly, we can do the same algorithm for all 6 directions, so we end up 6 polyhedra, which are:
    $$\{AC, \ AC, \ BC, \ BC, \ AC, \ BC\}$$
    Then delete the repeated elements to obtain:
    $$\Omega_1' = \{AC, \ BC\}$$
    Then we apply these 6 directions to the set $\Omega_1$ again to obtain another 6 polyhedra, which are:
    $$\{AC, \ AB, \ CB, \ AB, \ C, \ C\}$$
    Therefore we have:
    $$\Omega_2' = \{AC, \ AB, \ CB, \ C\}$$
    Then keep applying the same procedure, we get
    $$\Omega_3' = \{AC, \ ABC, \ CB\}$$
    $$\Omega_4' = \{AC, \ AB, \ CB, \ AB, \ C\} = \{AC, \ AB, \ CB, \ C\}$$
    So we have reached to a cycle of length 2.\\
\end{example}
Now we can consider the following:\\[+5pt]
$F:$ Proper transformation from the original conjecture, as defined by Equation \ref{F}.\\[+5pt]
$F':$ Pseudo transformation which ignores the directions that give whole edges.
We denote these restricted directions as $\widehat{\mathbb{S}^{n-1}}$, which is a subset of $\mathbb{S}^{n-1}$, and we denoted the corresponding images as follows:
\begin{itemize}
    \item[] $\Omega_{i+1} = F(\Omega_i)$
    \item[] $\Omega_{i+1}^{'} = F'(\Omega_i)$\\[+5pt]
\end{itemize}

\subsection*{Abstract Algebraic Formulation}
Let $V$ be a finite set, and let $\tau = \{d_j\}_{j \in \{1,..., n\}}$ be a set of orderings of the set $V$.\\[+3pt]
Let $\mathcal{P}(V)$ be the power set of set $V$. We define the function $G_{\tau} :\mathcal{P}(\mathcal{P}(V)) \longrightarrow \mathcal{P}(\mathcal{P}(V))$ as the following:\\[+5pt]
For each $j \in \{1,..., n\}$, let $\widehat{d_j}$ be the maximality function given by the ordering $d_j$, that is, $\widehat{d_j} = \max\{V\}$ given $d_j$, which means $\widehat{d_j}$ depends on $\tau$. \\[+5pt]
Define $D_j: \mathcal{P}(\mathcal{P}(V)) \longrightarrow \mathcal{P}(V)$ by,
$$D_j(X) := \{\widehat{d_j}(S) \ | \ S \in X\}$$
Where $X \in \mathcal{P}(\mathcal{P}(V))$ is a collection of subsets of $V$.\\[+5pt]
Finally, we define $G_{\tau} :\mathcal{P}(\mathcal{P}(V)) \longrightarrow \mathcal{P}(\mathcal{P}(V))$ by,
$$G_{\tau}(X) = \{D_j(X) \ | \ j \in \{1,... , n\}\}$$\\[+3pt]
\begin{example}
     Given the previous Example \ref{alg_exp}, we have the following corresponding algebraic structure based on out abstract algebraic formulation above.
     \begin{itemize}
        \item[$\bullet$] $V = \{A, B, C\}$
        \item[$\bullet$] $\tau = \{ACB, ABC, BCA, BAC, CAB, CBA\}$
        \item[$\bullet$] $\mathcal{P}(V) = \big\{A, B, C, \{A,B\}, \{A,C\}, \{B,C\}, \{A,B,C\} \big\}$
        \item[$\bullet$] The maximality function $\widehat{d_j}$ is equivalent to obtaining the supporting face given the direction $d_j$.
        \item[$\bullet$] The function $D_j$ gives the convex hull of all supporting faces for a given direction $d_j$.
        \item[$\bullet$] The function $G_{\tau}$ outputs the $\Omega$ set.
     \end{itemize}
     For example, given $d_1 = ACB$ and $X_0 = \Omega_0 = \big\{ \{A,B\}, C \big\} \in \mathcal{P}(V)$, we get:
     \begin{itemize}
        \item[$\bullet$] $\widehat{d_1}(AB) = A$
        \item[$\bullet$] $\widehat{d_1}(C) = C$
     \end{itemize}
     Then we can compute the convex hull,
     $$D_1(X) = \{A,C\}$$
     Therefore, given $X_0$, we can compute $X_1$
     $$X_1 = G_{\tau}(\big\{ \{A,B\}, C \big\}) = \big\{ \{A,C\}, \{B,C\}   \big\}$$
     We can then continue the process to obtain $X_2, X_3, X_4, ...$\\[+5pt]
\end{example}
The following names will be helpful on giving the equivalent algebraic version of the conjecture:
\begin{itemize}
    \item[$\bullet$] We call the function $G_{\tau}$ an \emph{oscillator} if it has the following property: For any $X_0 \in \mathcal{P}(\mathcal{P}(V))$, the sequence $X_0, X_1, ... $ defined by $X_{i+1} = G_{\tau}(X_i)$ eventually cycles with period at most 2.
    \item[$\bullet$] We call the tuple $(V, \tau)$ \emph{geometric} if $V$ is a set of vertices, and $\tau$ is given by directions $d \in \widehat{\mathbb{S}^{n-1}}$.
    \item[$\bullet$] We call $(V, \tau)$ \emph{finite} if $V$ is finite.
\end{itemize}
\emph{Equivalent conjecture}: For every finite geometric pair $(V, \tau)$, the function $G_{\tau}$ is an oscillator.\\[+5pt]

\begin{lemma}\label{Pd_in_Conv}
    Let $P_1, P_2, ... , P_k$ be polyhedra. If $P = Conv(\{P_1, P_2, ..., P_k \})$ and $d$ is the direction, then $P_d \subseteq Conv(\{(P_1)_d, (P_2)_d, ... , (P_k)_d\})$.
\end{lemma}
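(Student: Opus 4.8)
The plan is to show the containment pointwise: take an arbitrary point $x \in P_d$ and exhibit it as a convex combination of points lying in the supporting faces $(P_i)_d$. Since $P = Conv(\{P_1,\dots,P_k\})$, every point of $P$ — in particular $x$ — can be written as $x = \sum_{i=1}^k \lambda_i y_i$ with $y_i \in P_i$, $\lambda_i \geq 0$, $\sum_i \lambda_i = 1$. The key observation is that $x$ is a maximiser of $\langle \cdot, d\rangle$ over $P$, so $\langle x, d\rangle = \max_{z \in P}\langle z, d\rangle =: m$. I would compare $m$ with the quantities $m_i := \max_{z \in P_i}\langle z, d\rangle$: since $P_i \subseteq P$, we have $m_i \leq m$ for each $i$, while $\langle x, d\rangle = \sum_i \lambda_i \langle y_i, d\rangle \leq \sum_i \lambda_i m_i \leq m$. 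Equality forces $\lambda_i \langle y_i, d\rangle = \lambda_i m_i$ for every $i$, and hence for every $i$ with $\lambda_i > 0$ we get $\langle y_i, d\rangle = m_i$, i.e. $y_i \in (P_i)_d$.

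The remaining step is bookkeeping: for indices $i$ with $\lambda_i = 0$ the point $y_i$ is irrelevant and can be dropped from the combination (or replaced by any point of $(P_i)_d$, which is nonempty as $P_i$ is a polyhedron and $d$ a fixed direction — though one should note that if some $P_i$ is unbounded in direction $d$ the support face could be empty, so I would either assume the $P_i$ are polytopes as elsewhere in the paper, or simply restrict the convex combination to the support $\{i : \lambda_i > 0\}$). Then $x = \sum_{i : \lambda_i > 0} \lambda_i y_i$ is a convex combination of points $y_i \in (P_i)_d \subseteq Conv(\{(P_1)_d,\dots,(P_k)_d\})$, which gives $x \in Conv(\{(P_1)_d,\dots,(P_k)_d\})$ as desired.

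I do not anticipate a genuine obstacle here; the statement is essentially the elementary fact that a linear functional attains its maximum over a convex hull only at points whose barycentric representatives individually attain the maximum over their own pieces (weighted appropriately). The only mild subtlety to handle carefully is the unbounded case in the definition of the support face, and the degenerate contributions with $\lambda_i = 0$; both are disposed of by the restriction-to-support trick described above. This lemma will presumably be used to establish that in the algebraic reformulation the set-valued map $D_j$ (maximality under the ordering $d_j$) is compatible with convex hulls, i.e. that $G_\tau$ faithfully tracks the geometric transformation $F$ on the vertex sets.
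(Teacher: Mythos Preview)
Your argument is correct and follows essentially the same route as the paper's own proof: write a point of $P_d$ as a convex combination $\sum_i \lambda_i y_i$ with $y_i\in P_i$, push the linear functional $\langle\cdot,d\rangle$ through the sum, and use the resulting chain of equalities to force each active $y_i$ into $(P_i)_d$. The only minor difference is cosmetic: you introduce the intermediate maxima $m_i=\max_{z\in P_i}\langle z,d\rangle$ and a two-step inequality $\sum_i\lambda_i\langle y_i,d\rangle\le\sum_i\lambda_i m_i\le m$, whereas the paper bounds directly by $\alpha=m$ and then observes a posteriori that $d^T x_i=\alpha$ already certifies $x_i\in(P_i)_d$; your explicit handling of the indices with $\lambda_i=0$ and the remark about unbounded $P_i$ are refinements the paper omits.
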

Intuitively, consider the diagram below:
\begin{figure}[H]
    \begin{center}
        \includegraphics[scale = 0.8]{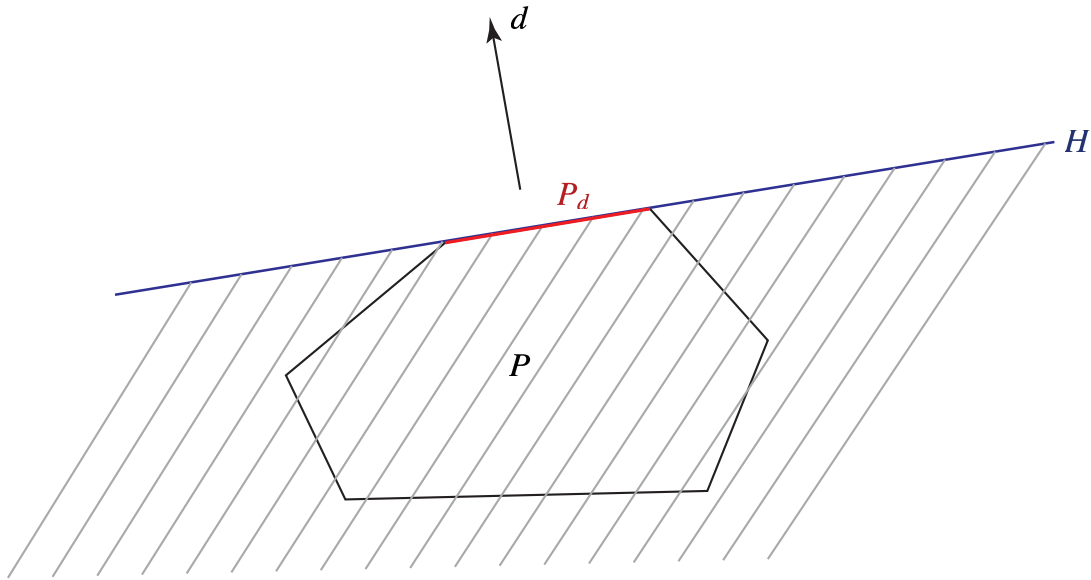}
    \end{center}
    \caption{Hyperplane $H$ contains $P_d$}\label{fig:HypContainPd}
\end{figure}
Suppose $P$ is the polyhedron from the lemma, $d$ is a direction, and $H$ is the hyperplane contains $P_d$ and orthogonal to the direction $d$. Then given a point $p \in P_d$, we can write $p$ as a convex combination, that is $p = \sum_i^k \lambda_i x_i$, with $\sum_i^k \lambda_i = 1, \lambda_i \geq 0$, and $x_i \in \bigcup_i^k P_i$.\\[+5pt]
Then we know all polyhedra $P_i$ must be below the hyperplane $H$, therefore all $x_i$ must be in the shaded area. The point $p$ is obtained by averaging the points $x_i$, therefore, all $x_i$ must be contained in the hyperplane $H$. Indeed, if there $x_j$ is below the hyperplane $H$, there needs to be another point $x_j^{'}$ above the $H$, which contradicts the fact all $x_i$ must be contained in shaded area.\\[+5pt]
\begin{proof}
Let $d \in \mathbb{S}^{n-1}$ be the linear function. By definition, we have
$$P_d := \Argmax_{x \in P} (d^{T} x) = \{x \in P \ | \ d^{T} x = \max_{x \in P}(d^{T}x)\}$$
Set $\displaystyle \alpha = \max_{x \in P}(d^{T}x)$.\\[+5pt]
Let $p \in P_d$ be an arbitrary point on $P_d$. Then we can write
$$p = \sum_i^{k} \lambda_i x_i$$
where $x_i \in P_i \subseteq P$.\\[+5pt]
Then we have
\begin{align*}
    \alpha & = d^{T} p \ \ (\text{by definition of }P_d)\\[+3pt]
            &　= d^{T} \sum_i^{k} \lambda_i x_i \\[+3pt]
            & = \sum_i^{k} \lambda_i (d^{T} x_i) \ \ (\text{since }d \text{ is linear})\\[+3pt]
            & \leq \sum_i^{k} \lambda_i \alpha \\[+3pt]
            & = \alpha \ \ (\text{since } \sum_i^k \lambda_i = 1)
\end{align*}
Therefore, the equality holds, so we have
$$\sum_i^{k} \lambda_i (d^{T} x_i) = \sum_i^{k} \lambda_i \alpha$$
which implies
$$\lambda_i (d^{T} x_i) = \lambda_i \alpha, \ \ \text{for each } i$$
which means for $\lambda_i > 0$, we get
$$d^{T}x_i = \alpha = \max_{x \in P}(d^{T}x)$$
In order to maximise $\sum_i^{k} \lambda_i (d^{T} x_i)$, we need to maximise each $d^{T} x_i$ with $\lambda_i > 0$.\\[+5pt]
By definition, $(P_i)_d$ is the subset of polyhedron $P_i$ such that $d^{T} x_i$ is maximal for $x_i \in P_i$.
$$(P_i)_d := \{x_i \in P_i \ | \ d^{T}x_i = \max_{x_i \in P_1}(d^{T}x_i)\}$$
Since $x_i \in P_i$ and $d^{T}x_i$ is maximal, then $x_i \in (P_i)_d$.\\[+5pt]
Therefore, we conclude that $P_d \subseteq Conv(\{(P_1)_d, (P_2)_d, ... , (P_k)_d\})$.\\[+5pt]
\end{proof}

%The point of the Lemma \ref{Pd_in_Conv} is that given we have the result
%$$P_d \subseteq Conv(\{(P_1)_d, (P_2)_d, ... , (P_k)_d\}) \subseteq \Omega_i(d) \in \Omega_{i+1}$$
%We can conclude that as long as $P$ satisfies $P = Conv(\{P_1, P_2, ..., P_k \})$, then $\Omega_i(d)$ is the same whether $P$ in $\Omega_i$ or not. In other words, $\Omega_{i+1}$ is completely determined by $P_1, P_2, ..., P_k$.\\[+10pt]

\begin{lemma}\label{fac_vert_neig_exp}
    Let $P$ be a polytope, $g \in \mathbb{S}^{n-1}$ exposes a face $F$ of $P$.\\[+3pt]
    Then $g$ has a neighbourhood $N_g$ such that any $g' \in N_g \cap \mathbb{S}^{n-1}$ exposes a vertex in $F$.
\end{lemma}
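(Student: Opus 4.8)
The plan is to pick a point $x$ in the relative interior of $F$ — for instance, the barycenter of the vertices of $F$ — and to perturb the exposing direction $g$ slightly so that among the points of $F$ it becomes ``generic''. More precisely, since $P$ is a polytope, $F$ has finitely many vertices $v_1,\dots,v_m$; I would choose a direction $w$ in the affine hull of $F$ (shifted to pass through the origin, i.e.\ in the linear space parallel to $\operatorname{aff}(F)$) that separates the values $\langle v_i, w\rangle$, so that $\langle v_i, w\rangle$ attains its maximum at a unique vertex, say $v_1$. Such a $w$ exists because the bad set — directions for which two distinct $v_i$ tie — is a finite union of hyperplanes in that linear space, hence not everything (here one uses that $F$ is not a single point only if $m\ge 2$; if $F$ is already a vertex the statement is trivial with $N_g = \mathbb{S}^{n-1}$).

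Next I would set $g_t = g + t w$ for small $t > 0$ and argue that for $t$ small enough, $g_t$ still exposes a \emph{subset} of $F$, and within $F$ it exposes exactly $v_1$. The first part is a standard compactness/continuity argument: since $g$ strictly separates $F$ from $P \setminus F$ in the sense that $\langle x, g\rangle = \alpha$ for $x \in F$ and $\langle y, g\rangle \le \alpha - \delta$ for all vertices $y$ of $P$ not in $F$ (with $\delta > 0$, using finiteness of the vertex set of $P$), a small perturbation $g_t$ keeps those outside vertices strictly below; so the face exposed by $g_t$ is contained in $\operatorname{conv}$ of the vertices of $F$, i.e.\ in $F$. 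The second part is because on the vertices of $F$ the linear functional $\langle \cdot, g_t\rangle = \alpha + t\langle \cdot, w\rangle$ (all $v_i \in F$ share the same $\langle v_i, g\rangle = \alpha$) is maximized exactly where $\langle \cdot, w\rangle$ is, namely uniquely at $v_1$. Finally I would normalize: $g' = g_t / \|g_t\|$ lies on $\mathbb{S}^{n-1}$ and exposes the same face as $g_t$, namely the vertex $v_1 \in F$. Taking $N_g$ to be a ball around $g$ of radius smaller than the $t$ we used (after renormalization, an open neighborhood in $\mathbb{S}^{n-1}$ works for the whole cone of such perturbations) gives the claim; strictly speaking I only need \emph{some} neighborhood each of whose points exposes \emph{a} vertex of $F$, and the perturbation argument shows every sufficiently small perturbation of $g$ exposes a face inside $F$, which if it is not yet a vertex can be perturbed again — so I would instead phrase it as: the set of $g'$ near $g$ exposing a face $\subseteq F$ is a neighborhood, and on a dense open subset of it the exposed face is a vertex; but since the lemma only asserts existence of a neighborhood in which \emph{each} $g'$ exposes \emph{some} vertex of $F$, I should be careful here.

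Let me reconsider the exact wording: the lemma says \emph{any} $g' \in N_g \cap \mathbb{S}^{n-1}$ exposes a vertex in $F$, which is too strong as literally stated unless we allow the degenerate reading that perturbations can still expose higher faces. I suspect the intended statement is that $g$ can be perturbed to expose a vertex of $F$, or that there is a neighborhood in which exposed faces are contained in $F$; I would prove the robust statement that there is a neighborhood $N_g$ with every $g' \in N_g \cap \mathbb{S}^{n-1}$ exposing a face of $P$ contained in $F$, and then, since faces of $F$ are again simplices/polytopes, note that vertices of $F$ are exposed by an open dense subcone, which suffices for all the applications in the paper. The bulk of the work is the two continuity estimates (outside vertices stay strictly dominated; the perturbation direction $w$ breaks ties inside $F$), both routine once the separation constant $\delta$ is extracted from finiteness of the vertex set.

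The main obstacle I anticipate is not any single estimate but getting the \emph{quantifiers} right: one must be careful that ``exposes a vertex in $F$'' holds for \emph{every} point of the neighborhood, not merely generically, and I expect the honest version of the lemma (and the one actually needed downstream) is the ``face contained in $F$'' formulation together with the observation that a further generic perturbation pins down a vertex; reconciling the stated wording with what is provable and needed is the delicate point.
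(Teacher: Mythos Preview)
Your core argument --- extracting a uniform gap $\delta>0$ between the supporting value $\alpha$ on $F$ and $\langle g,y\rangle$ for vertices $y\notin F$, then using continuity of the inner product to show nearby directions still expose a face contained in $F$ --- is correct, and your diagnosis that the lemma as literally worded is too strong is exactly right. The paper takes the contrapositive route: it assumes a sequence of \emph{restricted} directions $d_j'\to g$ each exposing a vertex $v\notin F$, passes to a constant subsequence by finiteness of the vertex set, and takes the limit in $\langle d_j',u\rangle<\langle d_j',v\rangle$ to contradict $\langle g,u\rangle>\langle g,v\rangle$. Content-wise the two arguments coincide --- your $\delta$ is precisely the direct witness behind the paper's limit contradiction --- but the sequential phrasing is shorter and avoids any explicit perturbation construction. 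Your tie-breaking vector $w$ is superfluous here: in the proof of Theorem~\ref{P=ConvP'} the lemma is applied only to $g'\in\widehat{\mathbb{S}^{n-1}}$, directions that by definition already expose a single vertex, so once ``exposed face $\subseteq F$'' is established the conclusion ``exposes a vertex in $F$'' is automatic. This is exactly the reading you arrive at in your final paragraph, so your worry about the quantifiers is well placed and correctly resolved.
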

\begin{proof}
We will prove the result by contradiction.\\[+3pt]
Suppose to the contrary, then there exists a sequence of restricted directions $\{d_j^{'}\}_{j = 1}^{\infty}$ converging to $d$ such that each $d_j^{'}$ exposes a vertex not in $F$.\\[+3pt]
Since $P$ is finite, without loss of generality, we can assume that each $d_j^{'}$ exposes the same vertex $v \notin F$.\\[+3pt]
Let $u \in F$ be an arbitrary point. Since $v \notin F$, we have
$$\langle d,u \rangle > \langle d,v \rangle .$$
But for each $j$, $d_j^{'}$ exposes $v$, therefore, we have
$$\langle d_j^{'},u \rangle < \langle d_j^{'},v \rangle .$$
Given $\{d_j^{'}\} \longrightarrow d$ as $j \rightarrow \infty$, we have
$$\langle d_j^{'}, \cdot \rangle \longrightarrow \langle d, \cdot \rangle$$
Therefore, we have
$$\langle d,u \rangle \leq \langle d,v \rangle .$$
A contradiction.
\end{proof}
\begin{theorem}\label{P=ConvP'}
    If $P \in \Omega_i$, then there exist $P_1', ..., P_k' \in \Omega_i^{'}$ such that $P = Conv(\{P_1', ... , P_k'\})$.
\end{theorem}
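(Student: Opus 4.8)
The plan is to prove the statement by induction on $i$, simultaneously establishing the auxiliary inclusion $\Omega_i' \subseteq \Omega_i$; both are immediate for $i = 0$ since $\Omega_0' = \Omega_0$. The inductive machinery rests on three facts available from the preceding material and elementary polyhedral geometry: Lemma~\ref{Pd_in_Conv} (the supporting face of a convex hull is contained in the convex hull of the supporting faces of the pieces); Lemma~\ref{fac_vert_neig_exp} (a restricted direction sufficiently close to $d$ exposes, in any fixed polytope, a vertex of the face that $d$ exposes); and the fact that the restricted directions form an open dense subset of $\mathbb{S}^{n-1}$ (their complement is a finite union of great subspheres, one per edge of the finitely many polytopes on the vertex set), while the normal cone $N_S(v)$ of a polytope $S$ at a vertex $v$ is full-dimensional, so every $d \in N_S(v)$ lies in $\overline{\mathrm{int}\, N_S(v)}$ and every $d' \in \mathrm{int}\, N_S(v)$ satisfies $S_{d'} = \{v\}$.

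For the auxiliary inclusion at step $i+1$: take $S \in \Omega_{i+1}'$, say $S = \Omega_i'(d')$ with $d'$ restricted. Since $\Omega_i' \subseteq \Omega_i$ we get $S = Conv(\{T_{d'} : T \in \Omega_i'\}) \subseteq Conv(\{P_{d'} : P \in \Omega_i\}) = \Omega_i(d')$; conversely, applying Lemma~\ref{Pd_in_Conv} to the $Conv$-representation of each $P \in \Omega_i$ supplied by the induction hypothesis gives $P_{d'} \subseteq \Omega_i'(d') = S$, hence $\Omega_i(d') \subseteq S$. So $S = \Omega_i(d') \in \Omega_{i+1}$; in particular $F$ and $F'$ coincide along restricted directions.

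For the main statement at step $i+1$, let $Q \in \Omega_{i+1}$, so $Q = \Omega_i(d) = Conv(\{P_d : P \in \Omega_i\})$ for some (possibly non-restricted) $d \in \mathbb{S}^{n-1}$. Each vertex $v$ of $Q$ is a vertex of some $P_d$, hence a vertex of $P \in \Omega_i$ attaining $\max_{x\in P}\langle d,x\rangle$; writing $P$ by the induction hypothesis as a convex hull of members of $\Omega_i'$ shows $v$ is a vertex of some $S \in \Omega_i'$, and $S \subseteq P$, $v \in S$ forces $\langle d,v\rangle = \max_{x\in S}\langle d,x\rangle$, i.e. $d \in N_S(v)$. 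I then pick a restricted $d'_v \in \mathrm{int}\, N_S(v)$ as close to $d$ as I like (possible by the density of the restricted directions and full-dimensionality of $N_S(v)$); then $S_{d'_v} = \{v\}$, so $v \in \Omega_i'(d'_v) \in \Omega_{i+1}'$. Moreover, once $d'_v$ is close enough to $d$ that Lemma~\ref{fac_vert_neig_exp} applies simultaneously to each of the finitely many $T \in \Omega_i' \subseteq \Omega_i$, every $T_{d'_v}$ is a single vertex of $T_d$, whence $\Omega_i'(d'_v) = Conv(\{T_{d'_v} : T \in \Omega_i'\}) \subseteq Conv(\bigcup_{T \in \Omega_i'} T_d) \subseteq Conv(\bigcup_{P \in \Omega_i} P_d) = Q$. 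Selecting one such $\Omega_i'(d'_v)$ for every vertex $v$ of $Q$ yields $Q = Conv(\{\text{vertices of } Q\}) \subseteq Conv(\bigcup_v \Omega_i'(d'_v)) \subseteq Q$, so $Q$ is the convex hull of finitely many members of $\Omega_{i+1}'$, which closes the induction.

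The main obstacle I anticipate is recognizing that induction on the stated property alone does not close, and that one must thread the auxiliary inclusion $\Omega_i' \subseteq \Omega_i$ through the argument: it is exactly this that makes Lemma~\ref{fac_vert_neig_exp} applicable to the pieces of $\Omega_i'$ via the face structure of the $P_d$'s and hence confines each $\Omega_i'(d'_v)$ to $Q$. A secondary technical point is the perturbation: $d$ may itself be a non-restricted direction lying on the boundary of the relevant normal cones, so one must rely on full-dimensionality of $N_S(v)$ (and of the cones implicit in Lemma~\ref{fac_vert_neig_exp}) to approximate $d$ by restricted directions in the interiors of these cones, and to do so simultaneously for all of the finitely many polytopes at hand.
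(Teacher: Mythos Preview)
Your argument is correct, but it is more elaborate than the paper's and seems to rest on a slight misreading of the definition. In the paper, $\Omega_{i}' = F'(\Omega_{i-1})$ is formed from $\Omega_{i-1}$, not from $\Omega_{i-1}'$; consequently the inclusion $\Omega_i' \subseteq \Omega_i$ is immediate (restricted directions are a subset of all directions) and needs no induction. The paper therefore proceeds directly: given $P \in \Omega_i$, write $P = \Omega_{i-1}(g)$, use Lemma~\ref{fac_vert_neig_exp} on each member of $\Omega_{i-1}$ to produce a neighbourhood $N_g$ of $g$ with $\Omega_{i-1}(g') \subseteq P$ for every $g' \in N_g$, and use Lemma~\ref{fac_vert_exp} to find, for each vertex $v$ of $P$, a restricted $g'_v \in N_g$ with $v \in \Omega_{i-1}(g'_v)$. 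Since $\Omega_{i-1}(g'_v) \in F'(\Omega_{i-1}) = \Omega_i'$ by definition, the result follows at once, with no inductive hypothesis needed.

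Your inductive detour through $\Omega_i'$ is not wrong, but it is avoidable: where you locate $v$ in some $S \in \Omega_i'$ with $d \in N_S(v)$ via the induction hypothesis, one can simply take $S = P \in \Omega_i$ itself (since $v \in P_d$ gives $d \in N_P(v)$), and then the set $\Omega_i(d'_v)$ already lies in $\Omega_{i+1}' = F'(\Omega_i)$ by definition. What your auxiliary step actually buys is the identity $\Omega_i(d') = \Omega_i'(d')$ for all restricted $d'$, i.e.\ half of Theorem~\ref{F'_maps}; so in effect you are proving Theorems~\ref{P=ConvP'} and~\ref{F'_maps} simultaneously by a single induction, whereas the paper proves~\ref{P=ConvP'} directly and then derives~\ref{F'_maps} from it via Lemma~\ref{Pd_in_Conv}. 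Both routes are valid; the paper's is shorter, yours packages more into one pass.
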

To prove Theorem \ref{P=ConvP'}, we need the two following lemmas (see \cite{Brondsted} for reference).\\[+5pt]
\begin{lemma}\label{vert_neig}
    If $d \in \mathbb{S}^{n-1}$ exposes a vertex $v$ from a polyhedron $P$. That is,
    $$\langle x-v, g \rangle < 0 \ \ \forall x \in P \setminus \{v\}.$$
    Then there is a neighbourhood of $d$ in which every direction also exposes $v$.
\end{lemma}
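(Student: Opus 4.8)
The plan is to reduce the statement to a finite system of strict linear inequalities and then exploit the fact that strict inequalities are preserved under sufficiently small perturbations of the direction. (The direction written $g$ in the displayed condition is the same $d$ appearing in the hypothesis, so throughout I would work with $d$.)

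First I would use the standard fact that a linear functional on a polytope attains its maximum at a vertex. Writing the vertices of $P$ as $v, w_1, \dots, w_m$ with $v$ the exposed vertex, I would observe that the hypothesis ``$d$ exposes $v$'' is equivalent to the finite system $\langle w_i - v, d\rangle < 0$ for $i = 1, \dots, m$: indeed any $x \in P\setminus\{v\}$ can be written $x = \lambda_0 v + \sum_j \lambda_j w_j$ with $\lambda_j \ge 0$, $\sum_j \lambda_j = 1$ and not all weight on $v$, so $\langle x - v, d\rangle = \sum_j \lambda_j \langle w_j - v, d\rangle$ is a sum of nonpositive terms with at least one strictly negative, hence $< 0$; and the same computation shows conversely that any direction satisfying the $m$ strict inequalities exposes $v$.

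Next I would set $\varepsilon := \min_{1 \le i \le m}\langle v - w_i, d\rangle$, which is a minimum over a finite set and therefore satisfies $\varepsilon > 0$, and put $R := \max_i \|w_i - v\|$. For any $d' \in \mathbb{S}^{n-1}$ with $\|d' - d\| < \varepsilon/(R+1)$, Cauchy--Schwarz gives, for every $i$,
$$\langle w_i - v, d'\rangle = \langle w_i - v, d\rangle + \langle w_i - v, d' - d\rangle \le -\varepsilon + R\,\|d' - d\| < 0,$$
so $d'$ also exposes $v$; the intersection of the open ball of radius $\varepsilon/(R+1)$ about $d$ with $\mathbb{S}^{n-1}$ is the required neighbourhood. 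Equivalently, the whole assertion is the remark that $d$ lies in the interior of the normal cone $N_P(v) = \{g : \langle x - v, g\rangle \le 0 \ \forall x \in P\}$, which is open.

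There is no substantial obstacle here: the only place finiteness is used is to ensure $\varepsilon > 0$, and the rest is the routine perturbation estimate displayed above. If one insists on allowing $P$ unbounded rather than a polytope, the hypothesis additionally forces $\langle r, d\rangle < 0$ for each of the finitely many extreme rays $r$ of the recession cone of $P$; adjoining these strict inequalities to the system and adjusting $\varepsilon$ and $R$ accordingly, the identical argument goes through, so the ``hard part'' is merely this bit of bookkeeping.
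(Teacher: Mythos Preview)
Your argument is correct and is the standard proof: reduce to finitely many strict linear inequalities at the vertices (plus, in the unbounded case, the finitely many extreme rays of the recession cone), then use a Cauchy--Schwarz perturbation bound to show the inequalities persist in a small ball around $d$. The paper itself does not prove this lemma at all; it simply states it and refers the reader to Br{\o}ndsted's textbook, so there is nothing to compare against beyond noting that your write-up supplies the missing details in the expected way.

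One small wording slip: in your parenthetical remark you write that $d$ lies in the interior of the normal cone $N_P(v)$, ``which is open''. The normal cone is closed; presumably you mean its interior is open (a tautology) or, more to the point, that the set of directions strictly exposing $v$ coincides with the interior of $N_P(v)$, which is nonempty because $v$ is a vertex. This does not affect the proof, but tighten the phrasing.
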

\begin{lemma}\label{fac_vert_exp}
    Let $P$ be a polyhedron, $d \in \mathbb{S}^{n-1}$ exposes a face of $P$, and $v$ be an extreme point of the face exposed by $d$. Then for all $\epsilon > 0$, there exists $d'$ such that $||d - d'|| \leq \epsilon$ and $d'$ exposes $v$.\\[+5pt]
\end{lemma}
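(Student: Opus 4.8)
The plan is to exhibit an explicit perturbation of $d$ of the form $d_t := d + te$ and to show that for all sufficiently small $t > 0$ it exposes the single vertex $v$. Throughout I will use that $P$ is a polytope, so that it has finitely many vertices $v_1, \dots, v_m$; this is the case for every set arising in our setting, and it is what makes the gap argument below work. Write $\alpha := \max_{x \in P} \langle x, d \rangle$, so that the exposed face is $F = P_d = \{ x \in P \ | \ \langle x, d \rangle = \alpha \}$, and recall that $v$ is a vertex of the polytope $F$.

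First I would produce a direction that singles out $v$ within $F$. Since $v$ is an extreme point of $F$, it is an exposed point of $F$: there is a vector $e$ with $\langle v, e \rangle > \langle u, e \rangle$ for every vertex $u \neq v$ of $F$, and hence $\langle v, e \rangle > \langle x, e \rangle$ for every $x \in F \setminus \{v\}$. This is the standard fact that a vertex of a polytope admits a supporting hyperplane meeting the polytope only at that vertex (see \cite{Brondsted}).

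Next, set $d_t := d + te$ and split the vertices of $P$ into those lying in $F$ and those outside. For a vertex $u \in F$ we have $\langle u, d \rangle = \alpha$, so $\langle u, d_t \rangle = \alpha + t \langle u, e \rangle$, and by the choice of $e$ this is strictly less than $\langle v, d_t \rangle = \alpha + t \langle v, e \rangle$ for every such $u \neq v$ and every $t > 0$. For a vertex $u \notin F$ we have $\langle u, d \rangle \leq \alpha - \delta$, where $\delta := \alpha - \max\{ \langle u, d \rangle \ | \ u \text{ a vertex of } P, \ u \notin F \} > 0$; if every vertex of $P$ lies in $F$ then $F = P$ and the first case already finishes the argument. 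Hence $\langle u, d_t \rangle - \langle v, d_t \rangle \leq -\delta + tM$, where $M := \max_u |\langle u, e \rangle - \langle v, e \rangle|$. Choosing $0 < t < \delta/M$ makes this quantity negative, so $v$ strictly dominates every other vertex of $P$ under $d_t$; since the maximum of a linear functional over a polytope is attained at a vertex, $v$ is the unique maximiser, i.e. $d_t$ exposes $v$.

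Finally I would normalise: the argmax is unchanged by positive scaling, so $d' := d_t / \|d_t\|$ still exposes $v$, and since $d_t \to d$ as $t \to 0^+$ with $\|d\| = 1$ we have $d' \to d$; thus for $t$ small enough $\|d - d'\| \leq \epsilon$, which is exactly the claim. The only delicate point is the treatment of the vertices outside $F$: the perturbation $te$ could in principle push some far-off vertex past $v$, and it is precisely the strict gap $\delta > 0$ (available because $P$ has finitely many vertices) balanced against the bound $M$ on how much $e$ can favour any vertex that forces the constraint $t < \delta/M$. Everything else is routine.
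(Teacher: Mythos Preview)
The paper does not actually prove this lemma: immediately before stating Lemmas~\ref{vert_neig} and~\ref{fac_vert_exp} it says ``we need the two following lemmas (see \cite{Brondsted} for reference)'' and then proceeds directly to the proof of Theorem~\ref{P=ConvP'}. So there is no argument in the paper to compare yours against.

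Your proof is correct and is the standard ``tilting'' argument one finds in polytope theory. You pick a direction $e$ exposing $v$ within the face $F$ (using that extreme points of a polytope are exposed, which you rightly attribute to \cite{Brondsted}), perturb $d$ to $d_t = d + te$, and then split the vertices of $P$ into those in $F$ (handled by the choice of $e$) and those outside $F$ (handled by the strict gap $\delta>0$, which exists because there are only finitely many vertices). The normalisation at the end is routine. One cosmetic point: when you write ``choosing $0<t<\delta/M$'' you should note the trivial case $M=0$, in which every $t>0$ works; this cannot cause trouble since if $F$ has any vertex other than $v$ your choice of $e$ already forces $M>0$. Your explicit acknowledgement that the argument needs $P$ to be a polytope (finitely many vertices) rather than a general polyhedron is appropriate, since that is exactly the situation throughout the paper.
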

\begin{proof} (Theorem \ref{P=ConvP'})\\[+5pt]
Let $P \in \Omega_i$, and let $\Omega_{i-1} = \{P_1, P_2, ..., P_n\}$.\\[+3pt]
Let $g$ be the direction such that $P = \Omega_{i-1}(g) = Conv(\{(P_1)_g, (P_2)_g, ... , (P_n)_g\})$.\\[+3pt]
Now we want to show two things:
\begin{itemize}
    \item[(1)] $g$ has a neighbourhood $N_g$ such that for any $g' \in N_g \cap \mathbb{S}^{n-1}$, $\Omega_{i-1}(g') \subseteq P$.
    \item[(2)] For any vertex $v \in P$, there is a $g' \in N_g \cap \mathbb{S}^{n-1}$ such that $v \in \Omega_{i-1}(g')$.
\end{itemize}
Part (1): Given $\Omega_i = \{P_1, ..., P_n\}$, by Lemma \ref{fac_vert_neig_exp}, we can construct $N_j$ of $g$ for each polygon $P \in \Omega_{i-1}$, so that any $g' \in N_j \cap \mathbb{S}^{n-1}$ exposes a vertex of $P_j$ in $(P_j)_g$.\\[+3pt]
Let $N_g = \cap_{j = 1}^{n} N_j$.\\[+3pt]
Since $g' \in N_g \cap \mathbb{S}^{n-1}$ is a restricted direction, and the restricted directions only expose vertices. Then for each $j$, we have $g' \in N_j \cap \mathbb{S}^{n-1}$. Hence $g'$ exposes a vertex of $P_j$ in $(P_j)_g \subseteq P$. \\[+3pt]
So we have
$$(P_j)_{g'} \subseteq P.$$
Therefore,
$$\Omega_{i-1}(g') = Conv(\{(P_j)_{g'}\}) \subseteq P.$$\\[+3pt]
Part(2): Let $v \in P$ be a vertex, then we know $v \in (P_j)_g$ for some $j$.\\[+3pt]
Now, by Lemma \ref{fac_vert_exp}, there is a $g' \in N_g \cap \mathbb{S}^{n-1}$ which exposes $v \in P_j$.\\[+3pt]
Therefore,
$$v \in \Omega_{i-1}(g').$$\\[+3pt]
Use Part (2) we have shown above, let $g_1^{'}, g_2^{'}, g_3^{'}, ... , g_g^{'} \in N_g$ be restricted directions such that every vertex $v \in P$ is contained in some $\Omega_{i-1}(g_j^{'})$.\\[+3pt]
Let $P_j^{'} = \Omega_{i-1}(g_j^{'})$ for some $j$. Then every vertex $v \in P$ is contained in $Conv(\{P_1^{'}, ... , P_k^{'}\})$.\\[+3pt]
Therefore, we get one inclusion:
$$P \subseteq Conv(\{P_1^{'}, ... , P_k^{'}\}).$$\\[+3pt]
Now by Part (1), since each $g_j^{'} \in N_g$, we have each $P_j^{'} = \Omega_{i-1}(g_j^{'}) \subseteq P$.\\[+3pt]
Hence,
$$Conv(\{P_1^{'}, ... , P_k^{'}\}) \subseteq P .$$\\[+3pt]
Therefore, we have the equality:
$$P = Conv(\{P_1^{'}, ... , P_k^{'}\})$$\\[+3pt]
\end{proof}

Recall that we denote our two transformations by:
\begin{itemize}
    \item[] $\Omega_{i+1} = F(\Omega_i)$ and
    \item[] $\Omega_{i+1}^{'} = F'(\Omega_i)$
\end{itemize}
\begin{theorem}\label{F'_maps}\
    Given the two transformations we had above, there exists two following maps:
    \begin{itemize}
        \item[$\bullet$] $\Omega_{i+1} = F(\Omega_i^{'})$
        \item[$\bullet$] $\Omega_{i+1}^{'} = F'(\Omega_i^{'})$
    \end{itemize}
\end{theorem}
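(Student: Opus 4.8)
The plan is to isolate a single one-step identity comparing $F(\Omega)$ with $F'(\Omega)$ for an arbitrary finite family of polyhedra $\Omega$, and then propagate it along the sequence by induction; I use the convention $\Omega_0' = \Omega_0$ and write $\Omega(d) = Conv(\{P_d : P \in \Omega\})$ as in the definition of $F$. The heart of the matter is the equality
$$\bigl(F(\Omega)\bigr)(d) = \bigl(F'(\Omega)\bigr)(d) \qquad \text{for every } d \in \mathbb{S}^{n-1}.$$
One inclusion is immediate: because $\widehat{\mathbb{S}^{n-1}} \subseteq \mathbb{S}^{n-1}$, the family $F'(\Omega)$ is contained in $F(\Omega)$, so $\{Q_d : Q \in F'(\Omega)\} \subseteq \{P_d : P \in F(\Omega)\}$ and hence $(F'(\Omega))(d) \subseteq (F(\Omega))(d)$. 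For the reverse inclusion I would appeal to Theorem~\ref{P=ConvP'}: its proof, read with an arbitrary finite family $\Omega$ in the role of $\Omega_{i-1}$, shows that every $P \in F(\Omega)$ can be written $P = Conv(\{Q_1, \dots, Q_k\})$ with $Q_1, \dots, Q_k \in F'(\Omega)$. Then Lemma~\ref{Pd_in_Conv} gives $P_d \subseteq Conv(\{(Q_1)_d, \dots, (Q_k)_d\})$, and since each $(Q_j)_d \subseteq (F'(\Omega))(d)$ this yields $P_d \subseteq (F'(\Omega))(d)$. Taking the convex hull over all $P \in F(\Omega)$ gives $(F(\Omega))(d) \subseteq (F'(\Omega))(d)$, which completes the identity.

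From this identity both conversion operators transfer between $F(\Omega)$ and $F'(\Omega)$: for any finite family $\Omega$,
$$F\bigl(F(\Omega)\bigr) = \{(F(\Omega))(d) : d \in \mathbb{S}^{n-1}\} = \{(F'(\Omega))(d) : d \in \mathbb{S}^{n-1}\} = F\bigl(F'(\Omega)\bigr),$$
and, restricting $d$ to $\widehat{\mathbb{S}^{n-1}}$, likewise $F'(F(\Omega)) = F'(F'(\Omega))$. I would then close the argument by induction on $i$ with the statement $\Omega_{i+1} = F(\Omega_i')$. The case $i = 0$ is $\Omega_1 = F(\Omega_0) = F(\Omega_0')$. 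For $i \ge 1$, assume the statement at $i - 1$, i.e. $\Omega_i = F(\Omega_{i-1}')$; then $\Omega_{i+1} = F(\Omega_i) = F\bigl(F(\Omega_{i-1}')\bigr) = F\bigl(F'(\Omega_{i-1}')\bigr) = F(\Omega_i')$, using the transfer identity on $\Omega = \Omega_{i-1}'$ and $\Omega_i' = F'(\Omega_{i-1}')$. This gives the first map. The second, $\Omega_{i+1}' = F'(\Omega_i')$, is the defining recursion of the primed sequence; equivalently it equals $F'(\Omega_i)$, since $F'(\Omega_i) = F'\bigl(F(\Omega_{i-1}')\bigr) = F'\bigl(F'(\Omega_{i-1}')\bigr) = F'(\Omega_i')$ by the same transfer identity.

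The only step carrying real content is the one-step identity $(F(\Omega))(d) = (F'(\Omega))(d)$, and even that is assembled entirely from tools already in place: the trivial inclusion $F'(\Omega) \subseteq F(\Omega)$ for the easy half, and Theorem~\ref{P=ConvP'} together with Lemma~\ref{Pd_in_Conv} for the other half. I expect the main risk in writing this up to be bookkeeping rather than mathematics: the identity compares $F$ and $F'$ applied to the \emph{same} family, whereas $\Omega_i$ and $\Omega_i'$ only become interchangeable after the induction has been threaded, so one must be careful about which family occupies which slot at each step.
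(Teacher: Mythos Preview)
Your core argument is exactly the paper's: establish $\Omega_i(d) = \Omega_i'(d)$ for every $d$ via the trivial inclusion $\Omega_i' \subseteq \Omega_i$ on one side and Theorem~\ref{P=ConvP'} together with Lemma~\ref{Pd_in_Conv} on the other. The one discrepancy is a misreading of the definition: in the paper the primed sequence is defined by $\Omega_{i+1}' := F'(\Omega_i)$, not $F'(\Omega_i')$, so the second bullet is \emph{not} ``the defining recursion'' but a claim to be proved --- and under the correct definition both bullets follow immediately from your one-step identity with $\Omega = \Omega_{i-1}$ (since then $F(\Omega)=\Omega_i$ and $F'(\Omega)=\Omega_i'$), making your induction layer unnecessary.
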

That is:
\begin{center}
    \includegraphics[scale = 0.25]{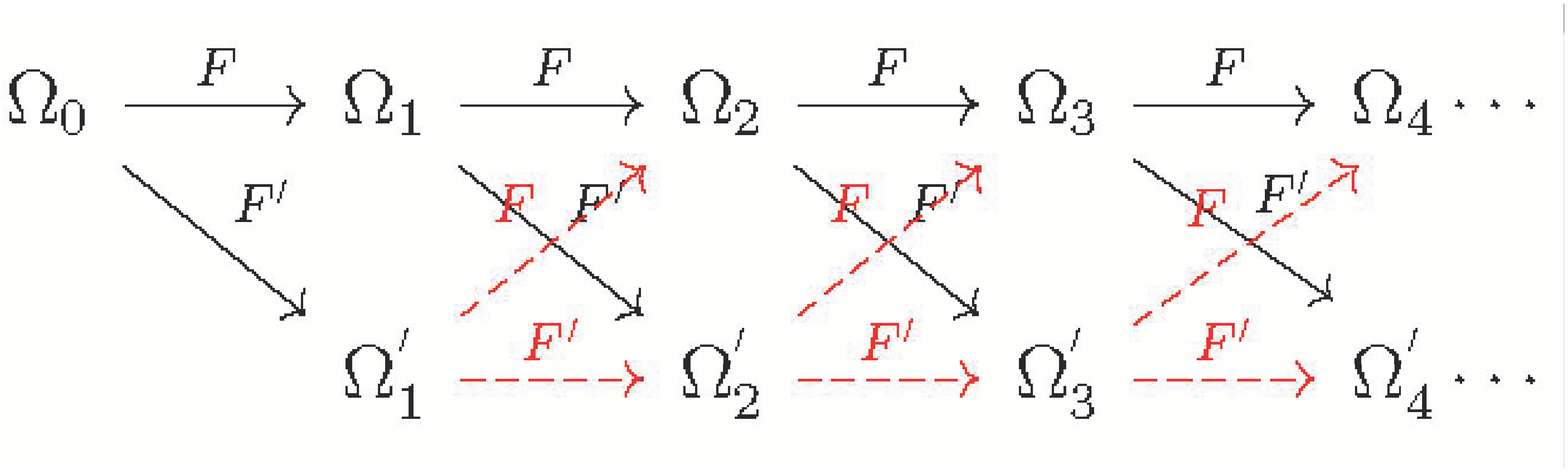}
\end{center}
\begin{proof}
This is equivalent to showing:
    \begin{itemize}
        \item[$\bullet$] $F(\Omega_i^{'}) = F(\Omega_i) = \Omega_{i+1}$
        \item[$\bullet$] $F'(\Omega_i^{'}) = F'(\Omega_i) = \Omega_{i+1}^{'}$
    \end{itemize}
    For the first equality $F(\Omega_i^{'}) = F(\Omega_i)$, this is the same as showing:
    $$\{\Omega_i^{'}(d) \ | \ d \in \mathbb{S}^{n-1}\} = \{\Omega_i(d) \ | \ d \in \mathbb{S}^{n-1}\}$$
    It is sufficient to show that for all $d \in \mathbb{S}^{n-1}$, we have:
    $$\Omega_i^{'}(d) = \Omega_i(d)$$
    By definition, this is the same as showing:
    $$Conv(\{P_d^{'} \ | \ P' \in \Omega_i^{'}\}) = Conv(\{P_d \ | \ P \in \Omega_i\})$$
    \begin{itemize}
       \item[$\bullet$] If $P' \in \Omega_i^{'}$, then $P' \in \Omega_i$. Therefore, we have $$ \{P_d^{'} \ | \ P' \in \Omega_i^{'}\} \subseteq \{P_d \ | \ P \in \Omega_i\}.$$
           Therefore, each element of $\{P_d^{'} \ | \ P' \in \Omega_i^{'}\}$ is also an element in $\{P_d \ | \ P \in \Omega_i\}$.\\[+3pt]
           Hence, we have one inclusion
           $$Conv(\{P_d^{'} \ | \ P' \in \Omega_i^{'}\}) \subseteq Conv(\{P_d \ | \ P \in \Omega_i\}).$$
        \item[$\bullet$] Let $P \in \Omega_i$, then by Theorem \ref{P=ConvP'}, we have
        $$P = Conv(\{P_1^{'}, P_2^{'}, ... , P_k^{'}\}), \ P_j^{'} \in \Omega_i^{'}.$$
            Then by Lemma \ref{Pd_in_Conv}, we have
            $$P_d \subseteq Conv(\{(P_1^{'})_d, (P_2^{'})_d, ... , (P_k^{'})_d\}).$$
            But, $$\{(P_1^{'})_d, (P_2^{'})_d, ... , (P_k^{'})_d\} \subseteq \{P_d^{'} \ | \ P' \in \Omega_i^{'}\}$$
            Which implies
            $$Conv(\{(P_1^{'})_d, (P_2^{'})_d, ... , (P_k^{'})_d\}) \subseteq Conv(\{P_d^{'} \ | \ P' \in \Omega_i^{'}\})$$
            Hence, we have
            $$P_d \subseteq Conv(\{P_d^{'} \ | \ P' \in \Omega_i^{'}\})$$
            Therefore, we have the other inclusion
            $$Conv(\{P_d \ | \ P \in \Omega_i\}) \subseteq Conv(\{P_d^{'} \ | \ P' \in \Omega_i^{'}\}).$$
    \end{itemize}
    Therefore, we have shown
    $$Conv(\{P_d^{'} \ | \ P' \in \Omega_i^{'}\}) = Conv(\{P_d \ | \ P \in \Omega_i\}),$$
    which means
    $$\Omega_i^{'}(d) = \Omega_i(d),$$
    for all $d \in \mathbb{S}^{n-1}$\\[+5pt]
    This gives the desired result:
    $$F(\Omega_i^{'}) = F(\Omega_i).$$\\[+5pt]
    To show the second map $F'(\Omega_i^{'}) = F'(\Omega_i)$, we need to show
    $$\{\Omega_i^{'}(d') \ | \ d' \in \widehat{\mathbb{S}^{n-1}}\} = \{\Omega_i(d') \ | \ d' \in \widehat{\mathbb{S}^{n-1}}\},$$
    But, we have shown from the previous map $F(\Omega_i^{'}) = F(\Omega_i)$ that
    $$\Omega_i^{'}(d) = \Omega_i(d),$$
    for all direction $d \in \mathbb{S}^{n-1}$.\\[+3pt]
    Therefore, it must be true for all restricted directions $d' \in \widehat{\mathbb{S}^{n-1}}$.\\[+3pt]
    So we have the map
    $$F'(\Omega_i^{'}) = F'(\Omega_i).$$
\end{proof}

\begin{theorem}\label{equavalence}
    $\Omega_0^{'}, \Omega_1^{'}, \Omega_2^{'}, ... $ eventually reach to a cycle of length 2 if and only if $\Omega_0, \Omega_1, \Omega_2, ... $ eventually also reach to a cycle of length 2.
\end{theorem}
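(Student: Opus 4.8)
The plan is to obtain this equivalence as a purely formal consequence of the two ``bridge'' identities already in hand, with no further geometric input. Those identities are, on one side, $\Omega_{i+1} = F(\Omega_i')$ from Theorem~\ref{F'_maps}, and, on the other, the defining relation $\Omega_{i+1}' = F'(\Omega_i)$. Each of these expresses one of the two sequences, shifted by one index, as a fixed function of the other sequence, and that is precisely what is needed to transport eventual $2$-periodicity in either direction. I would first pin down the statement: ``$\Omega_0,\Omega_1,\dots$ reaches a cycle of length $2$'' means there is an $M$ with $\Omega_{i+2} = \Omega_i$ for all $i \ge M$, and likewise for the primed sequence.

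For the forward direction, suppose the primed sequence is eventually $2$-periodic, say $\Omega_{i+2}' = \Omega_i'$ for all $i \ge M$. Applying $F$ and invoking $\Omega_{j+1} = F(\Omega_j')$ (Theorem~\ref{F'_maps}) gives $\Omega_{i+3} = F(\Omega_{i+2}') = F(\Omega_i') = \Omega_{i+1}$ for every $i \ge M$, hence $\Omega_{j+2} = \Omega_j$ for all $j \ge M+1$, so the unprimed sequence is eventually $2$-periodic. For the converse, suppose $\Omega_{i+2} = \Omega_i$ for all $i \ge M$. Applying $F'$ and invoking the definition $\Omega_{j+1}' = F'(\Omega_j)$ gives $\Omega_{i+3}' = F'(\Omega_{i+2}) = F'(\Omega_i) = \Omega_{i+1}'$ for every $i \ge M$, hence $\Omega_{j+2}' = \Omega_j'$ for all $j \ge M+1$. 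Combining the two directions yields the theorem.

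The substantive content has in effect already been spent in Theorem~\ref{P=ConvP'} and Theorem~\ref{F'_maps} (which is where Lemmas~\ref{Pd_in_Conv}, \ref{fac_vert_neig_exp}, \ref{vert_neig} and \ref{fac_vert_exp} do the real work), so I do not expect a genuine obstacle. The only points to watch are minor bookkeeping issues: the index shift incurred each time one passes through $F$ or $F'$ costs a single extra step before periodicity is guaranteed, which is harmless; and the argument only ever uses the tails of the two sequences, so it is insensitive to whatever convention is adopted for the initial term $\Omega_0'$.
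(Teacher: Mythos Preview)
Your proof is correct and follows essentially the same approach as the paper: both directions apply one of the bridge maps ($F$ for $\Rightarrow$, $F'$ for $\Leftarrow$) to the assumed periodicity and read off periodicity of the other sequence via the identities $\Omega_{i+1}=F(\Omega_i')$ and $\Omega_{i+1}'=F'(\Omega_i)$. Your version is, if anything, slightly more careful in tracking the index shift and the ``for all $i\ge M$'' quantifier.
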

\begin{proof}
The result follows directly from the Theorem \ref{F'_maps}.\\[+5pt]
$(\Rightarrow)$ Suppose $\Omega_i' = \Omega_{i+2}'$, this implies that $F(\Omega_i^{'}) = F(\Omega_{i+2}^{'})$, which gives $\Omega_{i+1} = \Omega_{i+3}$.\\[+5pt]
$(\Leftarrow)$ Similarly, suppose $\Omega_i = \Omega_{i+2}$, this implies that $F'(\Omega_i) = F'(\Omega_{i+2})$, which gives $\Omega_{i+1}^{'} = \Omega_{i+3}^{'}$.
\end{proof}
\emph{Note}: We cannot apply the $F$ map or $F'$ map to the equation, as both maps may not be injective. Therefore we have to prove the Theorem \ref{equavalence} back track maps.\\[+5pt]

\begin{example}
    Consider the following example:
   \begin{figure}[H]
   \begin{center}
       \includegraphics[scale = 0.8]{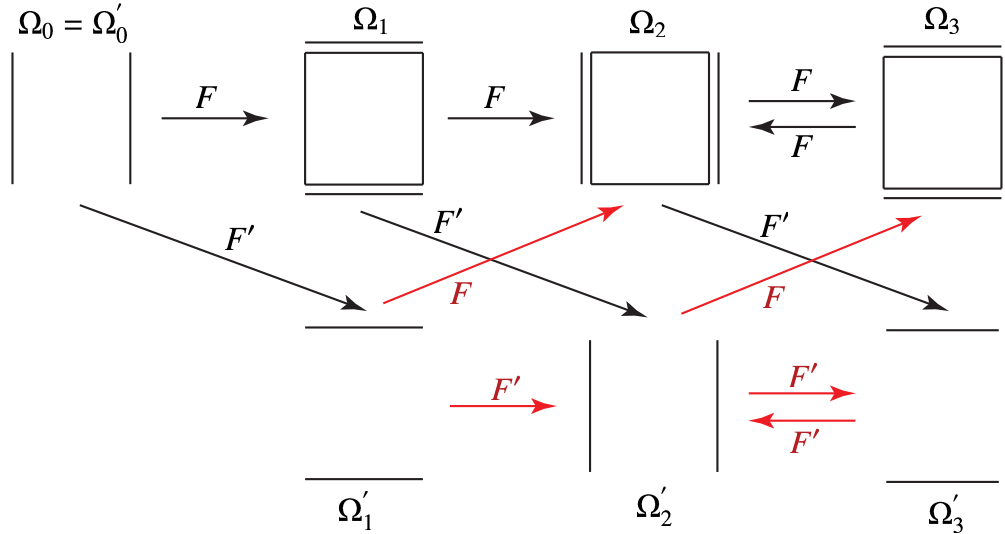}\\[+10pt]
   \end{center}
   \caption{An example on comparison between maps $F$ and $F'$}\label{fig:EgMapsFF'}
   \end{figure}
\end{example}
The importance of the abstract algebraic formulation result allows us to work on the conjecture using more general algebra. After we obtain the set of orderings on vertex set that corresponding the set of restricted directions, we are able to forget about the geometry of the sets, and proceed with the equivalent algebraic version of the conjecture, which means we may can apply many powerful algebraic and combinatorial tools on this problem.

\bibliographystyle{plain}
\bibliography{paper}

\begin{thebibliography}{10}

\bibitem{AbbasovDemyanov}
M.~E. Abbasov and V.~F. Demyanov.
\newblock Proper and adjoint exhausters in nonsmooth analysis: optimality
  conditions.
\newblock {\em J. Global Optim.}, 56(2):569--585, 2013.

\bibitem{Abbasov}
Majid~E. Abbasov.
\newblock Generalized exhausters: existence, construction, optimality
  conditions.
\newblock {\em J. Ind. Manag. Optim.}, 11(1):217--230, 2015.

\bibitem{Bagirov}
A.~M. Bagirov, A.~M. Rubinov, N.~V. Soukhoroukova, and J.~Yearwood.
\newblock Unsupervised and supervised data classification via nonsmooth and
  global optimization.
\newblock {\em Top}, 11(1):1--93, 2003.
\newblock With discussion and a rejoinder by the authors.

\bibitem{Brondsted}
Arne Br{\o}ndsted.
\newblock {\em An introduction to convex polytopes}, volume~90 of {\em Graduate
  Texts in Mathematics}.
\newblock Springer-Verlag, New York-Berlin, 1983.

\bibitem{Demyanov-Orig}
V.~F. Demyanov.
\newblock Exhausters of a positively homogeneous function.
\newblock {\em Optimization}, 45(1-4):13--29, 1999.
\newblock Dedicated to the memory of Professor Karl-Heinz Elster.

\bibitem{DemyanovNewTools}
V.~F. Demyanov.
\newblock Exhausters and convexificators---new tools in nonsmooth analysis.
\newblock In {\em Quasidifferentiability and related topics}, volume~43 of {\em
  Nonconvex Optim. Appl.}, pages 85--137. Kluwer Acad. Publ., Dordrecht, 2000.

\bibitem{DemyanovRoshchina}
V.~F. Demyanov and V.~A. Roshchina.
\newblock Exhausters, optimality conditions and related problems.
\newblock {\em J. Global Optim.}, 40(1-3):71--85, 2008.

\bibitem{GVTM}
Valentin~V. Gorokhovik and Marina~A. Trafimovich.
\newblock On methods for converting exhausters of positively homogeneous
  functions.
\newblock {\em Optimization}, 65(3):589--608, 2016.

\bibitem{Nonunique}
Jerzy Grzybowski, Diethard Pallaschke, and Ryszard Urba{\'n}ski.
\newblock Reduction of finite exhausters.
\newblock {\em J. Global Optim.}, 46(4):589--601, 2010.

\bibitem{Ioffe}
Alexander~D. Ioffe.
\newblock On the theory of subdifferentials.
\newblock {\em Adv. Nonlinear Anal.}, 1(1):47--120, 2012.

\bibitem{Kruger}
A.~Ya. Kruger.
\newblock On {F}r\'echet subdifferentials.
\newblock {\em J. Math. Sci. (N. Y.)}, 116(3):3325--3358, 2003.
\newblock Optimization and related topics, 3.

\bibitem{KALM}
Alexander~Y. Kruger and Marco~A. L{\'o}pez.
\newblock Stationarity and regularity of infinite collections of sets.
  {A}pplications to infinitely constrained optimization.
\newblock {\em J. Optim. Theory Appl.}, 155(2):390--416, 2012.

\bibitem{KMURGJKYGTDSM}
Mahide K{\"u}{\c{c}}{\"u}k, Ryszard Urba{\'n}ski, Jerzy Grzybowski,
  Yal{\c{c}}{\i}n K{\"u}{\c{c}}{\"u}k, {\.I}lknur~Atasever G{\"u}ven{\c{c}},
  Didem Tozkan, and Mustafa Soyertem.
\newblock Reduction of weak exhausters and optimality conditions via reduced
  weak exhausters.
\newblock {\em J. Optim. Theory Appl.}, 165(3):693--707, 2015.

\bibitem{Mordukhovich}
Boris~S. Mordukhovich.
\newblock {\em Variational analysis and generalized differentiation. {I}},
  volume 330 of {\em Grundlehren der Mathematischen Wissenschaften [Fundamental
  Principles of Mathematical Sciences]}.
\newblock Springer-Verlag, Berlin, 2006.
\newblock Basic theory.

\bibitem{Murzabekova}
Gulden~Y. Murzabekova.
\newblock Exhausters and implicit functions in nonsmooth systems.
\newblock {\em Optimization}, 59(1):105--113, 2010.

\bibitem{RockafellarWets}
R.~Tyrrell Rockafellar and Roger J.-B. Wets.
\newblock {\em Variational analysis}, volume 317 of {\em Grundlehren der
  Mathematischen Wissenschaften [Fundamental Principles of Mathematical
  Sciences]}.
\newblock Springer-Verlag, Berlin, 1998.

\bibitem{Roshchina}
Vera Roshchina.
\newblock On conditions for the minimality of exhausters.
\newblock {\em J. Convex Anal.}, 15(4):859--868, 2008.

\bibitem{Splines}
Nadezda Sukhorukova and Julien Ugon.
\newblock Characterization theorem for best polynomial spline approximation
  with free knots.
\newblock {\em Transactions of AMS, online first}.

\bibitem{Uderzo}
A.~Uderzo.
\newblock Calmness of nonsmooth constraint systems: dual conditions via
  scalarized exhausters.
\newblock {\em J. Convex Anal.}, 21(2):507--534, 2014.

\end{thebibliography}

\end{document}